\newtheorem{thr}{Theorem}[section]
\newtheorem{lem}[thr]{Lemma}
\newtheorem{prop}[thr]{Proposition}
\newtheorem{conj}[thr]{Conjecture}
\theoremstyle{definition}
\newtheorem*{defi*}{Definition}
\newtheorem{cor}[thr]{Corollary}
\newtheorem{remark}[thr]{Remark}
\newtheorem{examp}[thr]{Example}
\newtheorem{claim}[thr]{Claim}
\def\Z{\mathbb{Z}}
\def\res{\mathrm{res}}
\def\eps{\varepsilon}
\newcommand*{\floorfrac}[2]{\mathopen{}\left\lfloor\frac{#1}{#2}\right\rfloor\mathclose{}}
\newcommand*{\abs}[1]{\left\lvert #1\right\rvert}
\def\a{\alpha}
\def\b{\beta}
\newenvironment{poc}{\begin{proof}[Proof of claim]}{\end{proof}}
\newcommand{\di}{i} 
\newcommand*{\myproofname}{Proof}
\title{The Erd\H{o}s distinct subset sums problem in a modular setting}
\date{}
\author{
Stijn Cambie \thanks{Extremal Combinatorics and Probability Group (ECOPRO), Institute for Basic Science (IBS), Daejeon, South Korea, supported by the Institute for Basic Science (IBS-R029-C4),
E-mail: {\tt stijn.cambie@hotmail.com,\{jungao, hongliu, younjinkim\}@ibs.re.kr}.} \thanks{Department of Computer Science, KU Leuven Campus Kulak-Kortrijk, 8500 Kortrijk, Belgium. Supported by Internal Funds of KU Leuven (PDM fellowship PDMT1/22/005).}
\and
Jun Gao\footnotemark[1]
\and
Younjin Kim\footnotemark[1] 
\and
Hong Liu\footnotemark[1] 
}
\begin{document}
\maketitle
\begin{abstract}
    We prove the following variant of the Erd\H{o}s distinct subset sums problem. 
    Given $t \ge 0$ and sufficiently large $n$, every $n$-element set $A$ whose subset sums are distinct modulo $N=2^n+t$ satisfies 
    $$\max A \ge \Big(\frac{1}{3}-o(1)\Big)N.$$ 
    Furthermore, we provide examples showing that the constant $\frac 13$ is best possible. For small values of $t$, we characterise the structure of all sumset-distinct sets modulo $N=2^n+t$ of cardinality $n$.
\end{abstract}

\section{Introduction}

The Erd\H{o}s distinct subset sums problem, considered one of the oldest conjectures of Erd\H{o}s~\cite{Erdos89}, poses the question whether there exists a fixed constant $c>0$ such that for every $n$-element set $A=\{a_1,a_2,\cdots, a_n\}$ of positive integers, all of whose subsets have distinct sums, the largest element, say $a_n=\max A$, has to be greater than $c \cdot 2^n.$ 
Erd\H{o}s and Moser~\cite{Erdos56} first observed that $a_n\ge \frac{2^n}{n}$ and later improved this lower bound to $\Omega\left( \frac{2^n}{\sqrt n} \right)$ by considering the sum of the squares of the elements~\cite{Guy82}. Erd\H{o}s has posed this question multiple times and even offered monetary prize for its proof or disproof~\cite{Erdos70}.
The lower bound on the largest element $a_n$ has subsequently been improved by a constant factor 
~\cite{As77,Guy82,Elkies86,Bae95,Aliev08,DFX21}.
The current best known lower bound $\left(\sqrt{\frac{2}{\pi}}-o(1)\right)\frac{2^n}{\sqrt n}$ is due to Dubroff, Fox and Xu~\cite{DFX21}; an alternative proof is given by Steinerberger~\cite{Sb22}.
In the other direction, starting with a construction by Conway and Guy~\cite{CG68}, where they produced a construction with $a_n \leq 2^{n-2}$, better constructions have been obtained 
~\cite{Lunnon88,ANS90,Maltby97}.
The best construction so far is the one by Bohman~\cite{Bohman97}, where he produced a construction with $a_n \leq 0.22002\cdot 2^n.$
Here he studied variations of the Conway-Guy sequence. These sequences are derived from differences between an element and the smaller ones in an increasing sequence $(u_n)_n$ which is exponential: $S=\{u_n-u_1, u_n-u_2, \ldots, u_n-u_{n-1} \}.$
As such, most of the elements in the set have roughly the same magnitude.
This implies that two subsets of different sizes have a different subset-sum in most cases.
For two subsets with the same size, it is sufficient to have a different subset-sum modulo $u_n$.

Motivated by this, we consider the Erd\H{o}s distinct subset sums problem in a modular setting.
An  $n$-element set $\{a_1,a_2,\cdots, a_n\}$ of positive integers is said to  be \emph{sumset-distinct modulo $N$} if all subsets (including the empty set) have distinct sums modulo $N.$
Note that we may assume $N\ge 2^n$ for otherwise not all subset sums of a size-$n$ set can be distinct modulo $N$. We propose the following modular version of the Erd\H{o}s distinct subset sums conjecture.

\begin{conj}\label{conj:EDSC_modular}
    For a fixed $t\ge 0$ and sufficiently large $n$, let $A$ be an $n$-element set of positive integers, all of whose subsets have distinct sums modulo $N=2^n+t$, it holds that $$\max{A} \ge N/3\ge 2^n/3.$$
\end{conj}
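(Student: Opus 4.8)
The plan is to encode the distinctness condition additively and then fight for the exact constant via a stability argument. Write $m=\max A$ and $\sigma(S)=\sum_{a\in S}a$. Having all $2^n$ subset sums distinct modulo $N$ is equivalent to saying that $\prod_{i=1}^n(1+x^{a_i})$, reduced modulo $x^N-1$, is a $0/1$ polynomial with exactly $2^n$ nonzero coefficients. Evaluating at a primitive $N$-th root of unity $\omega$ and applying Parseval gives the clean identity
\[
\sum_{k=1}^{N-1}\ \prod_{i=1}^n \bigl|1+\omega^{k a_i}\bigr|^2 \;=\; N\cdot 2^n-4^n \;=\; t\cdot 2^n,
\]
using $\bigl|1+\omega^{k a_i}\bigr|^2=4\cos^2(\pi k a_i/N)$ together with the fact that the coefficients are $0$ or $1$. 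This is the engine of the whole argument: for fixed $t$ the right-hand side is only $\Theta(2^n)$, so the character sums must be small for essentially every frequency, a very rigid constraint on $A$.

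First I would extract the constant from the low frequencies. Since $a_i\le m$ and $\cos$ is decreasing on $[0,\pi/2]$, the $k=1$ term alone is at least $\bigl(4\cos^2(\pi m/N)\bigr)^n$; comparing this with $t\cdot 2^n$ and taking $n$-th roots forces $4\cos^2(\pi m/N)\le 2\,(1+o(1))$, hence $m\ge(\tfrac14-o(1))N$. Pushing the constant from $\tfrac14$ to $\tfrac13$ is the first genuine difficulty, because this bound is wasteful: it behaves as if every $a_i$ equalled $m$, whereas the elements are distinct and, crucially, a set clustered near $m$ cannot have distinct subset sums at all (its size-$k$ subset sums concentrate near the multiples $km$ and collide modulo $N$). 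I would therefore upgrade the second-moment identity to a higher-moment (additive-energy) count, or equivalently set up a dichotomy separating ``spread'' from ``clustered'' configurations, to eliminate the window $m\in[\tfrac14 N,\tfrac13 N)$ and reach the asymptotically sharp bound $m\ge(\tfrac13-o(1))N$.

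The heart of the matter, and the step I expect to be the main obstacle, is removing the $o(1)$ to obtain the exact inequality $m\ge N/3$. The hard part is that the energy method controls the distribution of the $a_i$ only up to errors that are $o(N)$ but not $O(1)$, so it cannot on its own separate $m=\lceil N/3\rceil-1$ from $m=\lceil N/3\rceil$. My plan is a stability analysis: show that the near-equality case of the character-sum identity pins down the residues of the $a_i$ modulo the relevant small divisors of $N$, forcing any extremal $A$ to be structurally $o(N)$-close to the configurations witnessing tightness of $\tfrac13$, and then verify the exact bound directly on that rigid finite family. For small $t$ this is realistic: one can instead invoke the exact structural characterisation of sumset-distinct sets modulo $N=2^n+t$ to check $m\ge N/3$ outright, which is the route I would take to settle those instances unconditionally. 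Closing the final additive gap uniformly in $t$—turning the asymptotic extremal statement into an exact one—is precisely the point that keeps the full claim at the level of a conjecture, so the proposal delivers the exact constant in the structured small-$t$ regime and the asymptotically sharp constant in general, isolating the uniform exact bound as the single remaining hard step.
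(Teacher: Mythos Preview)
The statement you are attempting is labelled a \emph{conjecture} in the paper; the paper does not prove it. What the paper does prove is the asymptotic version $\max A\ge(1-\eps)N/3$ (its Theorem~\ref{thr:EDSC_modular}), together with exact structural results for $t\le 3$. Your proposal ultimately concedes the same thing: you isolate the exact uniform bound as ``the single remaining hard step''. So as a proof of the conjecture, the proposal is incomplete by your own account.

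More to the point, your route to the asymptotic constant $\tfrac13$ is circuitous and contains a real gap. Parseval gives you
\[
\sum_{k=1}^{N-1}\prod_{i=1}^n\bigl|1+\omega^{ka_i}\bigr|^2=t\cdot 2^n,
\]
and bounding the $k=1$ term by the whole sum yields $\bigl(4\cos^2(\pi m/N)\bigr)^n\le t\cdot 2^n$, hence only $m\ge(\tfrac14-o(1))N$. You then appeal to an unspecified ``higher-moment/dichotomy'' argument to climb from $\tfrac14$ to $\tfrac13$; nothing concrete is offered, and this is precisely the step that does the work. The paper avoids this entirely with a sharper and far simpler observation: since exactly $t$ residues are missing from $S(A)$, one has
\[
g_A(\omega_N)=-\sum_{i=1}^t\omega_N^{s_i},\qquad\text{so}\qquad \prod_{i=1}^n\bigl|1+\omega_N^{a_i}\bigr|=\bigl|g_A(\omega_N)\bigr|\le t,
\]
which is the \emph{unsquared} bound at the single frequency $k=1$, with $t$ in place of your $\sqrt{t\cdot 2^n}$. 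If $\max A<(1-\eps)N/3$ then every factor exceeds $\sqrt{2+2\cos\!\bigl(\tfrac{2\pi(1-\eps)}{3}\bigr)}>1+\eps$, forcing $t>(1+\eps)^n$. This delivers the constant $\tfrac13$ in one line; your Parseval detour throws away exactly the information (only $t$ missing sums, not merely $0/1$ coefficients) that makes this work.
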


The condition that $n$ is sufficiently large is necessary, as shown by the following example. Let $n=9, t=104$ and consider the set $\{77,117,137,148,154,157,159,160,161\}$. It is sumset-distinct modulo $616=2^9+104$, but  $161<\frac{1}3 2^9 < \frac N3$.

In this paper, we prove the following asymptotic version of Conjecture~\ref{conj:EDSC_modular}. Comparing the constant $\frac{1}{3}$ below with the one ($0.22002$) obtained by Bohman~\cite{Bohman97}, our result witnesses a slightly different behaviour of sumset-distinct sets in the modular setting.

\begin{thr}\label{thr:EDSC_modular}
    Let $0<\eps<1$ and $n,t$ be positive integers such that $t<(1+ \eps)^n$. 
    Assume that $A=\{a_1,a_2,\cdots,a_n\}$ is an $n$-element set of positive integers in which the sums of all its subsets are distinct modulo $N=2^n+t.$ Then 
    $$\max{A} \ge (1-\eps)N/3.$$
    Furthermore, the constant $1/3$ is optimal. That is, There exist sets that are sumset-distinct modulo $N=(1+o(1))2^n$ for which $\max A\le(\frac{1}{3}+o(1))N.$
\end{thr}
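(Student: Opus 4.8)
The plan is to attack the lower bound through an exponential-sum identity that converts the sumset-distinct hypothesis into a single sharp inequality, and to establish optimality by a direct construction of sets clustered near $N/3$. Write $\omega=e^{2\pi i/N}$ and $\sigma(T)=\sum_{i\in T}a_i$, and recall the generating identity $\sum_{T\subseteq[n]}\omega^{j\sigma(T)}=\prod_{i=1}^n(1+\omega^{ja_i})$. The number of ordered pairs $(T,T')$ with $\sigma(T)\equiv\sigma(T')\pmod N$ equals $\tfrac1N\sum_{j=0}^{N-1}\bigl|\prod_i(1+\omega^{ja_i})\bigr|^2$, and the sumset-distinct hypothesis forces this count to be exactly $2^n$ (each residue is hit at most once). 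Isolating the $j=0$ term ($=4^n$) and using $|1+\omega^{ja_i}|^2=4\cos^2(\pi ja_i/N)$, I obtain the clean identity
\[ \sum_{j=1}^{N-1}\prod_{i=1}^n\cos^2\!\Bigl(\frac{\pi j a_i}{N}\Bigr)=\frac{t}{2^n}. \]

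Arguing by contraposition, I would assume $\max A<(1-\eps)N/3$ and aim to show this forces the left-hand side above to be at least $\bigl(\tfrac{1+\eps}{2}\bigr)^n$; the identity then yields $t/2^n\ge\bigl(\tfrac{1+\eps}{2}\bigr)^n$, i.e.\ $t\ge(1+\eps)^n$, contradicting the hypothesis $t<(1+\eps)^n$. The constant $\tfrac13$ enters precisely here: since every $a_i<(1-\eps)N/3<N/3$ we have $\pi a_i/N<\pi/3$, whence $\cos(\pi a_i/N)>\cos(\pi/3)=\tfrac12$ for \emph{all} $i$, so the frequency $j=1$ produces a fully coherent product of cosines with no cancellation.

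The crux, and what I expect to be the main obstacle, is lower-bounding the full exponential sum. The single term $j=1$ gives only $\prod_i\cos^2(\pi a_i/N)\ge\cos^{2n}(\pi(1-\eps)/3)$, which falls short of $\bigl(\tfrac{1+\eps}{2}\bigr)^n$ for small $\eps$ because squaring is too costly; hence the argument must aggregate the contributions of a whole band of low frequencies $1\le j\le J$ with $J\approx N/\max A$, on which the angles $\pi ja_i/N$ stay bounded away from odd multiples of $\pi/2$. On this band I would use the concavity of $\log\cos^2$ together with the spread that sumset-distinctness forces on $\{a_i\}$ (which keeps $\sum_i a_i^2$ large, as in the Erd\H{o}s--Moser bound) to match the required lower bound. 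The delicate point is that the coherent/Gaussian regime is essentially confined to $j=1$ while the threshold at $\max A=N/3$ is sharp, so the estimate must capture this transition rather than rely on a single dominant frequency.

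For optimality I would construct, for a suitable $N=(1+o(1))2^n$, a set of $n$ integers all at most $(\tfrac13+o(1))N$ with distinct subset sums modulo $N$. The idea is to take $N$ close to $3M$ and place each element at $M+b_i$ with $0\le b_i\le B$ and $B=O(2^n/n)=o(N)$. A subset $T$ of size $s$ then has sum $sM+\sum_{i\in T}b_i$, whose residue records $s\bmod 3$ through the offset $sM\bmod 3M\in\{0,M,2M\}$; it remains to separate subsets of equal size-residue, which I arrange by choosing the low-order perturbations $b_i$ to form a sumset-distinct gadget of range $B$. Since $M+B=(\tfrac13+o(1))N$, the largest element meets the target, and I would finally fine-tune $N$ away from exactly $3M$ so that subsets whose sizes differ by a multiple of $3$ are also separated while keeping $N=2^n+o(2^n)$.
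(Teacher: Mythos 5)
Your Fourier set-up is correct, and your Parseval identity $\sum_{j=1}^{N-1}\prod_{i=1}^n\cos^2(\pi j a_i/N)=t/2^n$ does hold, but the obstacle you flag at the end of your lower-bound sketch is fatal rather than technical: the second moment is the wrong tool. The paper works with the \emph{first} moment at the single frequency $j=1$: since the $2^n$ subset sums occupy distinct residues, $g_A(\omega_N)=\prod_i(1+\omega_N^{a_i})=-\sum_{i=1}^{t}\omega_N^{s_i}$, summed over the $t$ missing residues, so $\lvert g_A(\omega_N)\rvert\le t$ with no squaring; and $\max A<(1-\eps)N/3$ makes every factor $\lvert 1+\omega_N^{a_i}\rvert=2\cos(\pi a_i/N)>1+\eps$, giving $t>(1+\eps)^n$ in three lines. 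Your squared version loses exactly a factor $2^{n}$: the $j=1$ term of your identity yields only $(1+\eps)^{2n}\le 2^n t$, i.e.\ $t\ge\bigl((1+\eps)^2/2\bigr)^n$, which is vacuous for all $\eps<\sqrt2-1$. This deficit cannot be recovered from your band $1\le j\le J$ with $J\approx N/\max A=O(1)$: for $j\ge 2$ the hypothesis $a_i<(1-\eps)N/3$ gives no lower bound whatsoever on $\cos^2(\pi j a_i/N)$ (elements near $N/4$ annihilate the $j=2$ term, elements near $N/6$ the $j=3$ term, and so on), so a sum of $O(1)$ terms, each potentially exponentially small, cannot be forced up to $\bigl(\tfrac{1+\eps}{2}\bigr)^n$. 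The per-frequency information $\lvert g_A(\omega_N^j)\rvert\le t$, valid for every $j\not\equiv 0\pmod N$, is strictly stronger than its Parseval aggregate, and your proposal never uses it.

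The optimality construction also fails, for two independent reasons. First, the gadget you invoke does not exist: $n$ nonnegative integers with distinct subset sums satisfy $\sum_i b_i\ge 2^n-1$ and, by the Erd\H{o}s--Moser bound, $\max_i b_i=\Omega(2^n/\sqrt n)$, so $B=O(2^n/n)$ is impossible --- indeed any gadget with $B=o(2^n)$ would disprove the Erd\H{o}s conjecture this paper is about. Moreover, since $\sum_i b_i\ge 2^n-1>M\approx N/3$, gadget sums overflow $M$, and the bookkeeping ``the residue records $s\bmod 3$'' collapses because $sM+\beta=(s+1)M+(\beta-M)$. Second, and more structurally, \emph{no} sumset-distinct set modulo $N=(1+o(1))2^n$ can have all its elements in $[M,M+o(N)]$: apply the first-moment bound at frequency $j=3$. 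One has $\lvert g_A(\omega_N^3)\rvert\le t=o(2^n)$, while $3a_i\equiv o(N)\pmod N$ forces every factor $\lvert 1+\omega_N^{3a_i}\rvert=2\lvert\cos(3\pi a_i/N)\rvert=2-o(1)$, so $\lvert g_A(\omega_N^3)\rvert=(2-o(1))^n$, a contradiction. The paper's construction is shaped precisely to evade this: it takes $N=2^n+2^k+1$ with $3\mid N-1$, forms a superincreasing set with total sum below $N$ ($a_i=2^{i-1}$ for $i\le n-k$ and $a_i=2^{i-1-(n-k)}(2^{n-k}+1)$ beyond), dilates by $\lambda=(N-1)/3$ (legitimate by the dilation lemma), and folds into $[0,N/2]$; the outcome has $n-k$ elements near $N/3$ together with $k$ elements spread geometrically through $(0,N/3]$, and it is this geometric tail that keeps $\lvert g_A(\omega_N^j)\rvert$ small at every nontrivial frequency. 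Clustering all $n$ elements near $N/3$, as you propose, is not merely hard to implement --- it is provably impossible.
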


A nice consequence of~\cref{conj:EDSC_modular} is that it can be used to prove the original Erd\H{o}s distinct subset sums problem when the sum of all elements $\sum_{i=1}^na_i$ is close to $2^n$.

\begin{prop}\label{prop:application}
    If \cref{conj:EDSC_modular} is true, then for any $A=\{a_1, a_2, \ldots, a_n\}$, $a_1< a_2<\ldots <a_n$, with distinct subset sums and $\sum_{i=1}^n a_i<2^n+t$, we have $2^{i-1} \le a_i \le 2^{i-1}+t$ for every large $i$.
\end{prop}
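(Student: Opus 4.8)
The plan is to first convert the integer hypothesis into the modular one, so that \cref{conj:EDSC_modular} becomes applicable, and then to run the conjecture alongside an elementary analysis of the gaps in the set of subset sums. Write $S=\sum_{i=1}^n a_i$ and let $\mathcal{S}$ be the set of subset sums of $A$, so $|\mathcal{S}|=2^n$ and $\mathcal{S}\subseteq\{0,1,\dots,S\}$ with $S\le 2^n+t-1$. Since $2^n$ distinct integers cannot fit into fewer than $2^n$ values, we get $S\ge 2^n-1$ and, crucially, that at most $t$ elements of $\{0,1,\dots,S\}$ fail to be subset sums. As $S<N:=2^n+t$, the subset sums are pairwise distinct modulo $N$, so $A$ is sumset-distinct modulo $N$ and \cref{conj:EDSC_modular} gives $a_n\ge N/3$; applied to a prefix, whose subset sums are distinct and lie in $\{0,\dots,P_i\}$ hence are distinct modulo any $M>P_i$, it likewise yields $a_i\ge (2^i+e_i)/3\ge 2^i/3$ whenever the relevant modulus is subexponential. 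Here $P_i=a_1+\cdots+a_i$, $\mathcal{S}_i$ is the set of subset sums of $\{a_1,\dots,a_i\}$, and $e_i:=P_i-(2^i-1)\ge 0$ counts exactly the elements of $\{0,\dots,P_i\}$ missing from $\mathcal{S}_i$. The whole argument is organised around the telescoping identity $a_i-2^{i-1}=e_i-e_{i-1}$, which shows that $2^{i-1}\le a_i\le 2^{i-1}+t$ is equivalent to $0\le e_i-e_{i-1}\le t$.

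The second step is the elementary gap analysis. Call an index $i$ \emph{superincreasing} if $a_i>P_{i-1}$. I would first check two clean facts: at a superincreasing step $a_i\ge P_{i-1}+1\ge 2^{i-1}$, so the lower bound is immediate; and if, in addition, step $i+1$ is superincreasing, then no integer of $\{0,\dots,P_i\}$ outside $\mathcal{S}_i$ can ever be completed by the larger elements $a_{i+1},\dots,a_n$, each of which exceeds $P_i$, so every such value is globally missing and hence $e_i\le t$. Together with $e_{i-1}\ge 0$ and the identity this gives $a_i\le 2^{i-1}+t$. Thus both bounds hold at any index where steps $i$ and $i+1$ are superincreasing. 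The same persistence principle, applied at an arbitrary step, shows that every element of $\{0,\dots,P_{i-1}\}\setminus\mathcal{S}_{i-1}$ lying strictly below $a_i$ survives to the global missing set, so there are at most $t$ of them; by the symmetry $s\mapsto S-s$ of $\mathcal{S}_{i-1}$ the same holds for those above $P_{i-1}-a_i$.

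This reduces everything to the non-superincreasing steps, where $a_i\le P_{i-1}$ and the translate $a_i+\mathcal{S}_{i-1}$ interleaves with $\mathcal{S}_{i-1}$; this is the main obstacle. Disjointness of the two translates forces $a_i\ge 2^i-1-P_{i-1}=2^{i-1}-e_{i-1}$, so a violation $a_i<2^{i-1}$ of the lower bound requires a genuinely large prefix excess $e_{i-1}$, most of which must consist of \emph{fillable} gaps, since the globally surviving ones number at most $t$. I would quantify this using the conjecture bound $a_i\ge 2^i/3$, which caps the per-step drop $e_{i-1}-e_i=2^{i-1}-a_i$ of the excess by $2^i/6$, together with the symmetry of $\mathcal{S}_{i-1}$: a central gap $h$ of $\mathcal{S}_{i-1}$ filled by the shift $a_i$ forces its mirror $P_{i-1}-h$ either to be filled as well, producing the rigid pattern $h-a_i,\,h+a_i\in\mathcal{S}_{i-1}$ while $h\notin\mathcal{S}_{i-1}$, or to survive as a global gap, and only $t$ of the latter are available. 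Pushing this bookkeeping through a downward induction on $i$ from $n$, so that the excess is monotone non-decreasing above the largest violating index, should show that a violating non-superincreasing step cannot occur once $i$ is large, whence the superincreasing analysis applies and yields $2^{i-1}\le a_i\le 2^{i-1}+t$. I expect the decisive difficulty to be exactly this exclusion: bounding the number of fillable gaps a single interleaving translate can absorb without creating more than $t$ surviving gaps, since this is precisely the interleaving regime in which the near-extremal modular configurations (with largest element close to $N/3$) live and in which only the conjecture, rather than pure counting, appears strong enough.
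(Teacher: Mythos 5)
Your opening move coincides with the paper's: since $\sum_{i=1}^n a_i<N=2^n+t$, the integer-distinct subset sums are automatically distinct modulo $N$, and \cref{conj:EDSC_modular} gives $a_n\ge N/3$. From there, however, your proposal diverges and -- by your own admission -- does not close. The missing idea is the amplification from the constant $\tfrac13$ to $\tfrac12$: the conjecture applied to the prefix $\{a_1,\dots,a_i\}$ with modulus $2^i+e_i$ only yields $a_i\gtrsim 2^i/3$, which does not exclude a violation $a_i\in\bigl(2^i/3,\,2^{i-1}\bigr)$, and your treatment of precisely this regime -- the non-superincreasing, interleaving case with its ``fillable gaps'' and mirror bookkeeping -- is a plan (``should show'', ``I expect the decisive difficulty to be exactly this exclusion'') rather than an argument. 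Worse, your downward induction cannot recover after even a single violating step: the excess then grows by $e_{i-1}-e_i=2^{i-1}-a_i$, which your own bounds allow to be as large as $2^i/6$, so at the next level the relevant modulus is $2^{i-1}+e_{i-1}$ with $e_{i-1}$ exponentially large in $i$. There \cref{conj:EDSC_modular} (which fixes $t$) no longer applies, and invoking \cref{thr:EDSC_modular} instead with $t_{i-1}\approx 2^{i-1}/3$ forces $\eps$ close to $1$, making the bound $(1-\eps)N/3$ essentially vacuous; your phrase ``whenever the relevant modulus is subexponential'' papers over exactly this. So the approach must rule out violations outright, and that is the part left unproven.

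The paper closes this gap with a short pigeonhole statement, \cref{prop:no_el_inN/3..}: in any $n$-set $A$ with distinct integer subset sums and $N'=1+\sum_{i}a_i$, every element $a\le 2^{n-1}-1$ satisfies $a<N'/3$, because the interval $[0,2a-1]$ contains at most $a<2^{n-1}$ elements of $S(A\setminus\{a\})$, whence some element of $S(A\setminus\{a\})$ is at least $2a$ and $S(A)$ contains an element of size at least $3a$. Combined with the conjecture's bound $a_n\ge (2^n+t)/3\ge N'/3$, the contrapositive forces $a_n\ge 2^{n-1}$ in one stroke; then $\sum_{i\le n-1}a_i\ge 2^{n-1}-1$ gives $a_n\le 2^{n-1}+t$, and -- the structural point your telescoping identity obscures -- the hypothesis is inherited verbatim by the prefix, since $\sum_{i\le n-1}a_i<2^{n-1}+t$ with the \emph{same} fixed $t$, so one recurses for all $i$ large enough that the conjecture applies. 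In particular the near-superincreasing structure you try to establish case-by-case falls out automatically, and no gap/mirror analysis is needed. If you want to salvage your writeup, keep your first paragraph and replace everything after it with a proof of the elementary inequality above.
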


Lending support to Conjecture~\ref{conj:EDSC_modular}, we establish its validity for $0\le t \le 3$ in strong sense by characterising the structure of all maximum-size sumset-distinct sets modulo $N=2^n+t$. 

We show that a set of cardinality $n$ is sumset-distinct modulo $2^n$ if and only if each element has a distinct $2$-adic valuation. The $p$-adic valuation of an integer $N$ is the integer $k$ such that $p^k$ divides $N$, but $p^{k+1}$ does not. This valuation is denoted as $v_p(N).$

\begin{thr}\label{thm:0}
    A set $A$ of cardinality $n$ is sumset-distinct modulo $N=2^n$ if and only if its elements can be ordered as $A=\{a_0,a_1, \ldots, a_{n-1}\}$ such that $v_2(a_j)=j$ for every $0 \le j \le n-1$. 
\end{thr}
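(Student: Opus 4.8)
The plan is to encode sumset-distinctness as a vanishing condition on a generating polynomial evaluated at $2^n$-th roots of unity, and to show that this condition is equivalent, in one stroke, to the valuations $v_2(a_i)$ forming a permutation of $\{0,1,\dots,n-1\}$. This handles both implications simultaneously, so the proof need not be split into ``if'' and ``only if'' arguments.

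First I would record the easy direction as a warm-up, since it clarifies the role of the valuations. Assume the elements are ordered so that $v_2(a_j)=j$. For distinct subsets $S\neq T$, write the difference of their sums as $\sum_j \varepsilon_j a_j$ with $\varepsilon_j\in\{-1,0,1\}$ not all zero, and let $m$ be the least index with $\varepsilon_m\neq 0$. Since $v_2(\varepsilon_m a_m)=m$ while every later term is divisible by $2^{m+1}$, the whole difference has $2$-adic valuation exactly $m\le n-1$, hence is nonzero modulo $2^n$. This already proves that such an ordering forces sumset-distinctness.

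For the full equivalence I would set $\omega=e^{2\pi i/2^n}$ and use the roots-of-unity filter. The number of subsets of $A$ whose sum is congruent to $r$ modulo $2^n$ equals $\frac{1}{2^n}\sum_{s=0}^{2^n-1}\omega^{-rs}\prod_{i=1}^n\bigl(1+\omega^{s a_i}\bigr)$, so $A$ is sumset-distinct (equivalently, every residue is attained by exactly one subset) if and only if $\prod_{i=1}^n(1+\omega^{s a_i})=0$ for every $s\in\{1,\dots,2^n-1\}$. A product vanishes iff one of its factors does, i.e. iff $\omega^{s a_i}=-1$ for some $i$, which happens exactly when $s a_i\equiv 2^{n-1}\pmod{2^n}$. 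The key computation is that, writing $v_2(s a_i)=v_2(s)+v_2(a_i)$, this congruence holds if and only if $v_2(s)+v_2(a_i)=n-1$.

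Thus sumset-distinctness is equivalent to the statement: for every $s\in\{1,\dots,2^n-1\}$ there is an index $i$ with $v_2(a_i)=n-1-v_2(s)$. As $s$ ranges over $\{1,\dots,2^n-1\}$ the value $v_2(s)$ takes each value in $\{0,1,\dots,n-1\}$, so the condition says precisely that every integer in $\{0,1,\dots,n-1\}$ occurs as the valuation of some element of $A$. Since $|A|=n$, and any element with $v_2\ge n$ (i.e. $\equiv 0 \pmod{2^n}$) can never satisfy the requirement, a pigeonhole count forces the $n$ valuations to be exactly a permutation of $\{0,1,\dots,n-1\}$, which yields the desired ordering. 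The only delicate points are the valuation computation for $s a_i\equiv 2^{n-1}$ and the edge case ruling out valuations $\ge n$; everything else is routine bookkeeping with the discrete Fourier inversion.
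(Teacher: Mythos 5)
Your proposal is correct: the Fourier inversion step, the equivalence $sa_i\equiv 2^{n-1}\pmod{2^n}\iff v_2(s)+v_2(a_i)=n-1$, the observation that $v_2(s)$ sweeps all of $\{0,1,\dots,n-1\}$ as $s$ ranges over $\{1,\dots,2^n-1\}$, and the final pigeonhole on $|A|=n$ all check out, and your warm-up $2$-adic argument for the ``if'' direction (least index with nonzero coefficient) is also valid, though logically redundant once the equivalence is in place. Your route is genuinely different in structure from the paper's, even though it lives in the same roots-of-unity circle of ideas. The paper splits the theorem: the ``if'' direction is a separate induction on $n$ (peel off the odd element and halve the rest), and the ``only if'' direction gets two proofs --- Proof~1 evaluates, for each $j$, the truncated generating function $G_j(x)=\prod_{a\in A_j}(1+x^a)$ over $A_j=\{a\in A: v_2(a)\le j\}$ at a primitive $2^{j+1}$-st root of unity, after first arguing that residues modulo $2^{j+1}$ equidistribute in $S(A_j)$; Proof~2 is an elementary parity-counting contradiction on a minimal counterexample. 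Your single evaluation of the full product $g_A(\omega^s)$ at all nonzero frequencies subsumes the per-level argument: $\omega^s$ with $v_2(s)=n-1-j$ is a primitive $2^{j+1}$-st root of unity, so your sweep over $s$ is the paper's sweep over $j$ in disguise, but working with the full product spares you the equidistribution lemma for the truncated sets $S(A_j)$. The real gain is that discrete Fourier inversion turns the vanishing condition into an \emph{equivalence}, so both implications fall out of one computation with no induction at all. What the paper's versions buy in exchange: Proof~1's level-by-level structure isolates exactly which piece of partial information (equidistribution mod $2^{j+1}$) forces the valuation $j$ to appear, which is the form of the argument that recurs elsewhere in the paper (e.g.\ in Lemma~5.1 and Theorem~1.2), and Proof~2 avoids complex numbers entirely.
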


When $N=2^n+1$ or $N=2^n+3$, we prove that maximum-size sumset-distinct sets modulo $N$, up to signs, are dilations of the powers of $2.$

\begin{thr}\label{thm:1-3}
     A set $A$ of cardinality $n$ is sumset-distinct modulo $N=2^n+1$ or $2^n+3$ for some integer $n\geq 1$ if and only if 
      it can be expressed as  $ A=\{ \pm \lambda 2^0, \pm \lambda 2^1, \ldots,  \pm \lambda 2^{n-1}\} \pmod N$ for some $\lambda$ satisfying $\gcd(\lambda, N)=1$ and some choice of the signs.
\end{thr}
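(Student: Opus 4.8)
The plan is to prove the two implications separately, with the forward direction being routine and the converse carrying all the difficulty. For the \emph{if} direction, since multiplication by a unit $\lambda$ is a bijection of $\Z/N$, it suffices to treat $\lambda=1$, i.e. $A=\{\epsilon_0 2^0,\dots,\epsilon_{n-1}2^{n-1}\}$ with $\epsilon_j\in\{\pm1\}$. Two subsets have equal sum modulo $N$ exactly when some nonzero vector $(c_0,\dots,c_{n-1})\in\{-1,0,1\}^n$ satisfies $\sum_j c_j 2^j\equiv 0\pmod N$. Since $\bigl|\sum_j c_j 2^j\bigr|\le 2^n-1<N$ for both $N=2^n+1$ and $N=2^n+3$, the congruence forces $\sum_j c_j 2^j=0$ exactly, and the largest-index nonzero term then makes this impossible unless all $c_j=0$. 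Hence the subset sums are distinct. (This in fact works for every $N\ge 2^n$.)

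The converse is the heart of the matter, and I would approach it by a tiling/independent-set analysis. Write $A'=A\setminus\{a\}$ for a chosen $a\in A$, and let $\Sigma(B)$ denote the set of subset sums of $B$ modulo $N$. Since $A$ is sumset-distinct, $\Sigma(A)=\Sigma(A')\sqcup(\Sigma(A')+a)$ is a disjoint union of two sets of size $2^{n-1}$, and $|\Sigma(A)|=2^n=N-t$, so $\Sigma(A')$ and its translate by $a$ cover all but exactly $t$ residues. Reading this inside the cycles of the shift-by-$a$ permutation of $\Z/N$ (which has $\gcd(a,N)$ cycles, each of odd length $N/\gcd(a,N)$ as $N$ is odd), $\Sigma(A')$ is an independent set in each cycle, whence $2^{n-1}=|\Sigma(A')|\le (N-\gcd(a,N))/2$ and therefore $\gcd(a,N)\le t$. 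In particular, for $t=1$ every element of $A$ is coprime to $N$.

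For $t=1$ the argument then closes cleanly. Fixing $a\in A$ and indexing $\Z/N$ by the single $N$-cycle $\ell\mapsto \ell a$, the set $\Sigma(A')$ is a \emph{maximum} independent set of the odd cycle $C_N$; such a set must be an arithmetic progression of common difference $2a$ and length $2^{n-1}$. Dividing by the unit $2a$, the set $A'':=(2a)^{-1}A'$ has its $2^{n-1}$ subset sums equal to an interval of consecutive residues containing $0$. A short \enquote{signed power} lemma then identifies $A''$: using $1+x+\cdots+x^{2^{k}-1}=\prod_{j=0}^{k-1}(1+x^{2^j})$ (with $k=n-1$) together with the fact that for each $b\in A''$ the set $\Sigma(A''\setminus\{b\})$ and its $b$-translate tile the interval, one shows each $b$ equals $\pm 2^{j}$ for a distinct $j$, and filling an interval of length $2^{n-1}$ forces $A''=\{\pm 2^{j}:0\le j\le n-2\}$. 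Unwinding gives $A'=\{\pm a\,2^{j}:1\le j\le n-1\}$, hence $A=\{a\}\cup A'=\{\pm a\,2^{j}:0\le j\le n-1\}$, the claimed form with $\lambda=a$ and $\gcd(\lambda,N)=1$.

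The main obstacle is the case $N=2^n+3$. Here $\Sigma(A')$ is only an \emph{almost}-maximum independent set of the $a$-cycle: its complement carries three units of excess, so it may split into up to three arithmetic progressions rather than one, and the step above no longer pins it down as a single progression. Moreover $\gcd(a,N)\le t$ now only yields $\gcd(a,N)\in\{1,3\}$, so one must additionally exclude elements with $\gcd(a,N)=3$ when $3\mid N$. I expect to resolve both points by exploiting that $\Sigma(A')$ is not an arbitrary independent set but a Minkowski sum $\bigoplus_{b\in A'}\{0,b\}$, whose additive rigidity is incompatible with scattered gaps: the three excess gaps should be forced to coalesce into a single block, reducing matters back to the interval/signed-power lemma, while a repeated $\gcd(a,N)=3$ should violate the covering count. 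This additive-combinatorial step for $t=3$ is where the real work lies.
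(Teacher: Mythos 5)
Your \emph{if} direction is fine, and your treatment of $N=2^n+1$ is correct and essentially the paper's proof in different clothing: the paper normalizes by $\lambda^{-1}$ so that $1\in A'$, shows $S(A'\setminus\{1\})$ is exactly the even numbers in an interval of length $N$, lifts to the integers, flips signs, and runs the same greedy identification of the powers of $2$ that your ``signed power lemma'' performs. One step of yours is genuinely different and arguably slicker: the paper proves coprimality (\cref{lem:relativeprime}) by a parity count of residues modulo a divisor $d$ of $\gcd(a,N)$, whereas you read the disjointness of $\Sigma(A')$ and $\Sigma(A')+a$ as independence in the $\gcd(a,N)$ odd cycles of the shift-by-$a$ permutation, obtaining the clean quantitative bound $\gcd(a,N)\le t$. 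Note also that the caveat you raise for $t=3$ is vacuous: $2^n\equiv 1$ or $2\pmod 3$, so $3\nmid N=2^n+3$, and your bound $\gcd(a,N)\le 3$ together with $N$ odd already forces $\gcd(a,N)=1$.

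The genuine gap is the case $N=2^n+3$, which you explicitly defer as ``where the real work lies'' --- and that work is substantial, not a routine extension of the $t=1$ analysis. The paper needs three ingredients absent from your proposal. First (\cref{lem:3APs}), since $s\in S(A)$ iff $T-s\in S(A)$ where $T=s(A)$, and $T/2\notin S(A)$, the three missing sums form a $3$-term arithmetic progression $\{x-\Delta,x,x+\Delta\}$; this partially vindicates your ``coalescing'' intuition, but only after dilating by $\Delta^{-1}$, not in the shift-by-$a$ cycle you work in. Second (\cref{clm:common_diff_relprime}), one must show $\gcd(\Delta,N)=1$, which the paper does by a delicate counting argument modulo $3$: tracking the counts $x_{i,j}$ of $S(A_i)$ in residue classes modulo a prime $p\mid\gcd(\Delta,N)$ through the recursion $x_{i+1,j}=x_{i,j}+x_{i,j-a_{i+1}}$ and inverting the circulant system over $\mathbb{Z}/3\mathbb{Z}$ (its determinant is $2$) to propagate uniformity of the counts down to $i=1$, a contradiction. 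Third, after normalizing so the missing sums are the consecutive residues $q+1,q+2,q+3$, knowing $S(\Delta^{-1}A)$ as a set does not by itself determine $\Delta^{-1}A$: the paper builds an explicit candidate $B=\{(-1)^{b_i+1}2^i\}$ from the binary digits of $q$, checks $S(B)=S(\Delta^{-1}A)$, and then invokes the external rigidity theorem (\cref{thr:GK23}, from [GK23]) that two $n$-sets with equal subset-sum sets modulo $N>2^n+1$ agree up to signs. The ``additive rigidity of Minkowski sums'' you appeal to is exactly the content of that theorem, not something that follows from the gap structure of an almost-maximum independent set. As written, your proposal proves the theorem only for $N=2^n+1$.
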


\begin{cor}\label{cor:1}
    For every integer $n\geq 1$, there are exactly $\frac{2^n \phi{(N)}}{2n}$ sumset-distinct sets modulo $N=2^n+1$ of cardinality $n$.
\end{cor}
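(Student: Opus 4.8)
The plan is to read off the count from the characterisation in \cref{thm:1-3} by quotienting out the redundancy in the parametrisation. Since $N=2^n+1$ gives $2^n\equiv-1\pmod N$, each term $-\lambda 2^i$ rewrites as $\lambda 2^{i+n}$, so a choice of signs $\epsilon_0,\dots,\epsilon_{n-1}\in\{\pm1\}$ corresponds bijectively to a transversal $S\subseteq\Z/2n\Z$ of the pairs $\{i,i+n\}$ (taking $i\in S$ when $\epsilon_i=+1$ and $i+n\in S$ when $\epsilon_i=-1$). Thus every sumset-distinct set is $A=\{\lambda 2^c\bmod N:c\in S\}$ for a unit $\lambda$ with $\gcd(\lambda,N)=1$ and such a transversal $S$, giving exactly $\phi(N)\cdot 2^n$ labelled pairs $(\lambda,S)$. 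I would first record that the order of $2$ modulo $N$ is exactly $2n$: from $2^{2n}\equiv1$ the order divides $2n$, and inspecting the integer representatives of $2^c$ for $1\le c\le 2n-1$ (all lying strictly between $1$ and $N$) rules out any smaller period. Hence the $n$ numbers $\lambda 2^c$ are distinct, so each pair genuinely gives an $n$-element set.

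The heart of the argument is to determine when two pairs produce the same set. Identifying $\langle 2\rangle$ with $\Z/2n\Z$ via $2^c\mapsto c$, suppose $\{\lambda 2^c:c\in S\}=\{\lambda' 2^c:c\in S'\}$. Comparing any two equal elements shows the ratio $\lambda'/\lambda$ is itself a power of two, say $2^k$, and then $S'=S-k$ in $\Z/2n\Z$. This is precisely the orbit relation for the action of $\Z/2n\Z$ given by $k\cdot(\lambda,S)=(\lambda 2^k,\,S-k)$, where $S-k$ is again a transversal because translation permutes the pairs $\{i,i+n\}$; conversely any two pairs in one orbit clearly give the same set. The action is free: $(\lambda 2^k,S-k)=(\lambda,S)$ forces $2^k\equiv1$, hence $k\equiv0\pmod{2n}$ since $2$ has order $2n$.

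With a free action of $\Z/2n\Z$ on the $\phi(N)\cdot 2^n$ pairs whose orbits are exactly the fibres of the map $(\lambda,S)\mapsto A$, every orbit has size $2n$ and the number of distinct sumset-distinct sets of cardinality $n$ equals $\frac{\phi(N)\cdot 2^n}{2n}$, as claimed. I expect the main obstacle to be the middle step—proving that coincidences between sets force $\lambda'/\lambda\in\langle 2\rangle$ with no further degeneracies—since this is exactly where the exact order of $2$ and the cyclic structure of $\langle 2\rangle$ must be combined; the order computation itself is the only genuinely arithmetic (as opposed to group-theoretic) input.
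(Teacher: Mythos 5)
Your proof is correct and follows essentially the same route as the paper: both count the $\phi(N)\cdot 2^n$ parameter choices (a unit $\lambda$ plus signs) supplied by \cref{thm:1-3} and divide by the size-$2n$ redundancy coming from the subgroup $\langle 2\rangle=\{\pm 2^i \lambda \lambda^{-1}\colon 0\le i\le n-1\}=\{\pm 2^i\}$ of $(\mathbb{Z}/N\mathbb{Z})^\times$, which contains $-1$ because $2^n\equiv -1\pmod N$. Your free-action formulation on pairs $(\lambda,S)$ merely makes explicit two facts that the paper's representative-set argument via $\res$ leaves implicit, namely that $2$ has order exactly $2n$ modulo $N$ and that distinct sign patterns (transversals) yield distinct sets.
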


Modulo $N=2^n+2$ is closely related to modulo $2^{n-1}+1$.

\begin{thr}\label{thm:2}
     A set $A$ of cardinality $n$ is sumset-distinct modulo $N=2^n+2$ for some integer $n \ge 2$ if and only if\\ 
    (i) $A$ is the union of an odd number and $2A'$, where $A'$ is sumset-distinct modulo $N/2,$ or\\
 (ii) $A$ is the union of $N/2$ and $A'$, where $A'$ is sumset-distinct modulo $N/2.$ 
    In particular, the number of such sets is $O(N^3).$
\end{thr}

 Somewhat surprisingly, our results show that there are more solutions for $N=2^n$ (when every sum appears exactly once) compared to $N=2^n+t$ for any $t>0$ and $n$ sufficiently large.
In the former case, the number of solutions is superpolynomial, at least $2^{\Omega(n^2)}=2^{\Omega(\log^2 N)}$, whereas in the latter case, for $t$ odd, the number of solutions is polynomial, bounded by $O\left(\sqrt{N}^{3+t}\right)$.

\section{Preliminaries}\label{subsec:background}

For a given set $A$, we define $s(A)=  \sum_{a \in A} a$ as the sum of its elements, and $S(A)$ as the set of sums of subsets of $A$, that is, $S(A)= \left\{s(B) \colon B \subseteq A\right\}.$ A translate of a set is denoted by $A+b=\{a+b:~a\in A\}$. A primitive $k$th root of unity is a solution to the equation $x^k=1$ that does not satisfy $x^m=1$ for any $m$ dividing $k$.
We use $\omega_k$ for the standard $k$th root of unity, which is equal to $\exp\left( \frac{2 \pi \di }{k} \right)$, so $1+\omega_k+\omega_k^2+\ldots + \omega_k^{k-1}=0$. For any $x \in \mathbb Z$, let $\res(x)=\min\{ x \pmod N, N- x\pmod N \}$ be the element between $0$ and $\floorfrac N2$ which is equivalent to $x$ or $-x$ modulo $N.$

We start with two lemmas which indicate that there can be multiple sumset-distinct sets modulo $N$ which are essentially the same. This implies that one can partition the sumset-distinct sets modulo $N$ into equivalent classes of sets which are related with a composition of two operations: dilation or changing signs. 

\begin{lem}\label{lem:obs1}
    A set $A=\{a_1, \ldots, a_n\}$ is sumset-distinct modulo $N$ if and only if each of the $2^n$ sets of the form $\{\pm a_1, \ldots, \pm a_n\}$ are sumset-distinct modulo $N$.
\end{lem}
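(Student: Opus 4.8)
The plan is to prove both directions of the biconditional by understanding how changing the sign of an element affects the subset sums. The key observation is that flipping the sign of a single element is essentially a translation of the collection of subset sums, which preserves distinctness. Concretely, I would first reduce to the case of flipping exactly one sign, since any of the $2^n$ sign patterns is obtained from $A$ by a finite sequence of single-sign flips, and distinctness is preserved at each step if and only if it is preserved overall.

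For the single-flip reduction, suppose $A=\{a_1,\dots,a_n\}$ and let $A'$ be the set obtained by replacing $a_k$ with $-a_k$ for one fixed index $k$. The crucial identity is that there is a bijection between subsets of $A$ and subsets of $A'$ given by keeping every subset as a formal index set $B\subseteq[n]$, and comparing the sum $s_A(B)=\sum_{i\in B}a_i$ with the corresponding sum $s_{A'}(B)$ where $a_k$ is replaced by $-a_k$. The relation I would exploit is
\[
s_{A'}(B)\equiv s_A(B)-[k\in B]\cdot 2a_k \pmod N,
\]
but the cleaner route is to observe that adding $a_k$ to every subset sum of $A'$ shifts the whole family: specifically, for any index set $B$,
\[
s_{A'}(B)+a_k\equiv s_A(B\,\triangle\,\{k\}) \pmod N,
\]
where $\triangle$ denotes symmetric difference. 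Since $B\mapsto B\,\triangle\,\{k\}$ is an involution (hence a bijection) on the subsets of $[n]$, the multiset of values $\{s_{A'}(B)+a_k : B\subseteq[n]\}$ is exactly the multiset $\{s_A(C):C\subseteq[n]\}$. Translating every element of a multiset by the fixed constant $a_k$ modulo $N$ preserves the number of distinct values, so the $2^n$ subset sums of $A'$ are distinct modulo $N$ if and only if those of $A$ are.

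With the single-flip equivalence established, both directions of the lemma follow immediately. If $A$ is sumset-distinct modulo $N$, then flipping any subset of signs can be achieved one flip at a time, each preserving distinctness, so every one of the $2^n$ sets $\{\pm a_1,\dots,\pm a_n\}$ is sumset-distinct. Conversely, if all $2^n$ such sets are sumset-distinct, then in particular $A=\{+a_1,\dots,+a_n\}$ is, which is the trivial direction. I would also note the harmless technical points: we must verify that flipping signs keeps the $a_i$ distinct modulo $N$ only insofar as is needed, but since the statement is about subset sums rather than the elements themselves, and sumset-distinctness already forces the elements to be distinct modulo $N$ (taking singletons), no separate argument is required.

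The main obstacle, though a mild one, is bookkeeping the symmetric-difference bijection correctly and being careful that all computations are modulo $N$ throughout; in particular one must confirm that the translation-by-$a_k$ map is a bijection on $\Z/N\Z$ and hence does not merge distinct residues or split equal ones. Everything else is formal, so I expect the proof to be short once the identity $s_{A'}(B)+a_k\equiv s_A(B\,\triangle\,\{k\})\pmod N$ is stated cleanly.
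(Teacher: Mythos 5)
Your proof is correct and takes essentially the same approach as the paper: the core step in both is the symmetric-difference trick $B\mapsto B\,\triangle\,\{k\}$ for a single sign flip, combined with translation by $a_k$ modulo $N$. Your formulation packages this as a global multiset identity $s_{A'}(B)+a_k\equiv s_A(B\,\triangle\,\{k\})\pmod N$ giving both directions at once, whereas the paper phrases it locally as transferring a collision between two disjoint index sets, but the underlying argument is identical.
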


\begin{proof}
    It suffices to observe that if $\{a_1, a_2 \ldots, a_n\}$ is a sumset-distinct set modulo $N$, then $\{-a_1, a_2, \ldots, a_n\}$ is also a sumset-distinct set modulo $N$. This can be easily seen: if there exist disjoint sets $I_1$ and $ I_2$ from the set $[n]$ such that $\sum_{i \in I_1 } a_i \equiv \sum_{i \in I_2 } a_i \pmod N$, then the same sets work when $1 \not \in I_1 \cup I_2$. Otherwise, if $1$ is included in either $I_1$ or $I_2$, we can simply switch the element $1$ between the two sets.
    In other words, let $I'_1=I_1 \Delta \{1\}$\footnote{Here $\Delta$ denotes the symmetric difference of the two sets.} and $I'_2=I_2 \Delta \{1\}$. These are two distinct index subsets of $[n]$, corresponding to two subsets of elements in $\{-a_1, a_2, \ldots, a_n\}$ with the same sum modulo $N.$
\end{proof}

\begin{lem}\label{lem:obs2}
For a given $\lambda$ where $\gcd(\lambda, N)=1,$
    a set $A=\{a_1, \ldots, a_n\}$ is sumset-distinct modulo $N$ if and only if $\{\lambda a_1, \ldots, \lambda a_n\}$ is also sumset-distinct modulo $N$.
    \end{lem}

\begin{proof}
    It suffices to note that $\sum_{i \in I_1 } a_i \equiv \sum_{i \in I_2 } a_i \pmod N$ if and only if $\lambda \sum_{i \in I_1 } a_i \equiv \lambda \sum_{i \in I_2 } a_i \pmod N$.
\end{proof}

We will also need the following corollary of~\cite[Thr.~1.5]{GK23}.
Here we use that the group $\left(\frac{\mathbb Z}{N \mathbb Z},+\right)$ is abelian and does not have elements of order $2$ when $N$ is odd.

\begin{thr}[\cite{GK23}]\label{thr:GK23}
    Let $A,B$ be multisets in $G= \frac{ \mathbb Z}{N \mathbb Z}$ for an odd positive integer $N.$
Then $S(A)=S(B)$ implies that $A$ be obtained from $B$ by applying one of the following moves
\begin{enumerate}
    \item replace an element $x$ by $-x$;
    \item for some $\a, \b$ relative prime with $N$, replace the elements in the set $\a U_k$ by $\b U_k$ where $U_k=\{1,2,\ldots, 2^{k-1}\}$ and $k>0$ is the smallest integer for which $2^k \equiv \pm 1 \pmod N$.
\end{enumerate}
    When the cardinality of $A$ and $B$ is $n$, we have that
    \begin{itemize}
        \item when $N=2^{n}+1,$ $S(A)=S(B)$ implies that the elements of $B$ can be obtained up to sign from multiplying all elements in $A$ with some element $\lambda =\b \a^{-1}$ which is relative prime to $N,$
        \item when $N>2^n+1,$ $S(A)=S(B)$ implies that the elements in $A$ and $B$ are equal up to sign.
    \end{itemize}
\end{thr}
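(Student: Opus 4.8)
The plan is to derive this statement directly from \cite[Thr.~1.5]{GK23} by specialising the ambient group to $G=\mathbb Z/N\mathbb Z$ and then reading off what the two permitted moves can do once we impose $|A|=|B|=n$. First I would check the hypotheses of the cited theorem: $(\mathbb Z/N\mathbb Z,+)$ is abelian, and since $N$ is odd it has no element of order $2$, so the reconstruction theorem applies and tells us that $A$ is obtained from $B$ by a finite sequence of the two listed moves (sign flips, and the $\alpha U_k\to\beta U_k$ substitution). This yields the first, unquantified, part of the statement verbatim; all that remains is to analyse the second, cardinality-$n$, part.

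The crux is a short number-theoretic computation of the parameter $k$, the least positive integer with $2^k\equiv\pm1\pmod N$, and a comparison of $|U_k|=k$ with $n$. For any $k\le n$ one has $2^k-1\le 2^n-1<N$ and $2^k+1\le 2^n+1\le N$, so $2^k\equiv1$ forces $k=0$, while $2^k\equiv-1$ forces $N\mid 2^k+1$, which is possible for $1\le k\le n$ only when $N=2^n+1$ and $k=n$. Hence I would conclude that when $N=2^n+1$ the least such $k$ equals $n$, so $|U_k|=n$, whereas when $N>2^n+1$ every admissible $k$ satisfies $k\ge n+1$, so $|U_k|\ge n+1>n$.

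With $k$ pinned down, the two bullets follow by bookkeeping on the moves. When $N>2^n+1$ we have $|U_k|>|A|$, so no sub-multiset of the $n$-element set $A$ can equal a dilate $\alpha U_k$; move~2 is therefore never applicable, only sign flips remain, and $B$ equals $A$ up to sign. When $N=2^n+1$ we have $|U_k|=n=|A|$, so move~2, if ever used, must act on the whole of $A$: it can be applied only when $A$ itself equals $\alpha U_n$ up to signs, and it then replaces it by $\beta U_n=(\beta\alpha^{-1})\,\alpha U_n$, i.e.\ by $\lambda A$ up to signs with $\lambda=\beta\alpha^{-1}$ coprime to $N$. Interleaving this with sign flips, and noting that a composition of such global dilations is again a single dilation, shows that $B$ is $\lambda A$ up to sign, as claimed.

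The hard part will be the bookkeeping in the last step rather than any deep new idea: one must verify that the abstract moves of \cite{GK23}, when restricted to cardinality $n$, genuinely compose into a \emph{single} dilation together with independent sign changes, and that the multiset formalism of the cited theorem matches our set-theoretic statement of $S(A)=S(B)$ (which is automatic in the intended application, where the sumset-distinct hypothesis makes $S(A)$ an honest $2^n$-element set). Once the value of $k$ is established, this reduces to tracking that the element $\lambda=\beta\alpha^{-1}$ is well defined and invertible modulo $N$, which is immediate from $\gcd(\alpha,N)=\gcd(\beta,N)=1$.
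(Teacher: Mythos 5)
Your proposal is correct and takes essentially the same route as the paper, which likewise obtains this statement as a direct corollary of \cite[Thr.~1.5]{GK23}: one checks that $\left(\mathbb{Z}/N\mathbb{Z},+\right)$ is abelian with no elements of order $2$ for odd $N$, and then observes that the least $k>0$ with $2^k\equiv\pm1\pmod N$ equals $n$ when $N=2^n+1$ and is at least $n+1$ when $N\ge 2^n+3$, so move~2 either must act on all of $A$ (yielding the dilation $\lambda=\beta\alpha^{-1}$) or is inapplicable, leaving only sign flips. Your extra bookkeeping showing that interleaved global dilations and sign changes compose into a single dilation up to signs is sound and simply makes explicit what the paper leaves as an immediate remark.
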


For $N\ge 2^n+3$, it is important to note that the smallest positive integer $k\ge 1$ satisfying $2^k \equiv \pm 1 \mod N$ must satisfy $k \ge n+1$ and thus no multiple of $U_k$ can be a subset of a set with cardinality $n.$

\section{The asymptotic modular Erd\H{o}s distinct subset sums conjecture}\label{sec:EDSC_modular}

\begin{proof}[Proof of Theorem~\ref{thr:EDSC_modular}]
    Let us define the sumset generating function 
    $$g_A(x)=\prod_i (1+x^{a_i})$$ 
    and let $\{s_1, s_2, \ldots, s_t\}=\frac{\Z}{N\Z}\setminus S(A)$ be the set of $t$ numbers that cannot be expressed as the sum of a subset of $A$. Then we have
    $$g_A(\omega_N)+\sum_{i=1}^t \omega_N^{s_i}= \sum_{i=0}^{N-1}\omega_N^i=0.$$
  Hence, by the triangle inequality, we have
    $$t\ge \abs{\sum_{i=1}^t \omega_N^{s_i}}=\abs{g_A(\omega_N)} = \prod_{i=1}^{n} \abs{1+\omega_N^{a_i}}
= \prod_{i=1}^{n} \sqrt{2+2\cos \left(\frac{a_i}{N} \cdot 2\pi  \right)}.$$
Now, if $\max{A} < (1-\eps) \frac N3$, then using $\cos\left( \frac{\pi}{3} + \frac{2\pi}{3} x\right) < \frac 12 -x-\frac{x^2}{2}$ for $0<x<1$, we have for every $i\in [n]$ that
\begin{align*}
  \sqrt{2+2\cos \left(\frac{a_i}{N} \cdot 2\pi  \right)} &> \sqrt{2+2\cos \left( \frac{1-\eps}{3} \cdot 2\pi  \right)}
  = \sqrt{2-2 \cos\left( \frac{\pi}{3}+\frac{2\eps\pi}{3}\right)}>1+\eps.
\end{align*}
 This would imply $t>(1+\eps)^n$, contradicting the assumption.

\smallskip

We now construct the sets showing that the constant $c=\frac{1}{3}$ cannot be improved.

Let $n\gg k\gg 1$ be two integers, and consider $N=2^n+2^k+1$ such that $3 \mid (N-1)$, and so $3\mid 2^{n-k}+1$.
    Let $B = \{a_1,a_2,\cdots,a_n\}$, where
    \begin{align*}      
        a_i = \left \{
        \begin{aligned}
             &2^{i-1},  &1\le i\le n-k,\\
             &2^{i-1-(n-k)}(2^{n-k}+1), &n-k< i\le n. 
        \end{aligned}
        \right.
     \end{align*} 
     Since $\sum_{i=1}^j a_i \le a_{j+1}$ and $\sum_{i=1}^n a_i<N$, $B$ is  sumset-distinct module $N$.
     Let $\lambda = \frac{N-1}{3}$, then $\gcd(\lambda,N)=\gcd(N-1,N)=1$. By Lemma~\ref{lem:obs2}, we know that $\lambda B$ is also sumset-distinct modulo $N$.
     Let $A$ be the set obtained from $\lambda B$ by restricting all elements to $[N/2]$, that is, replacing $x$ by $\res(x)=\min\{ x \pmod N, N- x\pmod N \}$.
     Since $\lambda a_i = \frac{a_i}{3}N-\frac{a_i}{3}$, it is not hard to see that for $i\le n-k$, either $\lambda a_i<\frac{N}{3} + \frac{N}{2^k} \pmod N$ or $\lambda a_i>\frac{2N}{3} - \frac{N}{2^k} \pmod N$.
     If $i>n-k$, we have $3 \mid N-1$ and thus $\lambda a_i \equiv -\frac{a_i}{3} \pmod N$.
     Thus, we conclude that $\max A \le \frac{N}{3} + \frac{N}{2^k}=(\frac{1}{3}+o(1))N$ as $n\gg k\gg 1$.     
\end{proof}

We complement~\cref{thr:EDSC_modular} with the following observation for the original setting.

\begin{prop}\label{prop:no_el_inN/3..}
    Let $A=\{a_1, a_2, \ldots, a_n\} \in \binom{\mathbb Z^+}{n}$ be a set in which the sums of its subsets are distinct.
    Let $N=1+\sum_{i=1}^n a_i $.
    If $a_i\le 2^{n-1}-1$, then $a_i < \frac{N}{3}.$
\end{prop}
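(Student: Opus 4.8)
The plan is to prove the contrapositive: I will show that if $a_i \ge N/3$, then in fact $a_i \ge 2^{n-1}$, so that the hypothesis $a_i \le 2^{n-1}-1$ forces $a_i < N/3$. Write $a = a_i$, $M = s(A) = N-1$, and let $A' = A \setminus \{a_i\}$, a set of $n-1$ elements that inherits the distinct-subset-sums property. The assumption $a \ge N/3 = (M+1)/3$ rearranges to $3a \ge M+1$, i.e.\ $M - a \le 2a - 1$; in words, the sum of all the \emph{other} elements is at most $2a-1$.

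The key object is $S(A')$, the set of the $2^{n-1}$ distinct subset sums of $A'$. All of these lie in $[0,\,M-a] \subseteq [0,\,2a-1]$. I split them at the midpoint $a$: let $p = \abs{S(A') \cap [0,\,a-1]}$ and $q = \abs{S(A') \cap [a,\,2a-1]}$. Since every element of $S(A')$ is at most $M-a \le 2a-1$, every subset sum of $A'$ falls into exactly one of the two halves, so $p + q = 2^{n-1}$.

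Now I count the subset sums of $A$ lying in the \emph{middle} window $[a,\,2a-1]$ in two ways, according to whether the subset contains $a_i$. Subsets avoiding $a_i$ contribute exactly the $q$ values in $S(A') \cap [a,\,2a-1]$. Subsets containing $a_i$ have sum $a + s'$ for some $s' \in S(A')$, and this lies in $[a,\,2a-1]$ precisely when $s' \in [0,\,a-1]$, contributing the $p$ values $\{a + s' : s' \in S(A') \cap [0,\,a-1]\}$. By the distinct-subset-sums property all $p+q$ of these values are pairwise distinct, yet they all lie in the interval $[a,\,2a-1]$, which contains exactly $a$ integers. Hence $p + q \le a$, and combining with $p+q = 2^{n-1}$ gives $2^{n-1} \le a$, as required.

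I expect the only genuine subtlety to be precisely this two-way count in the window $[a,\,2a-1]$. A naive argument that merely packs the $2^{n-1}$ sums of $A'$ into the interval $[0,\,2a-1]$ of $2a$ integers yields only $a \ge 2^{n-2}$, which is off by a factor of two. The gain comes from noticing that in the window $[a,\,2a-1]$ the un-shifted sums coming from $A'$ and the $a$-shifted sums coming from subsets containing $a_i$ are forced to be simultaneously present and mutually distinct, so that $2^{n-1}$ sums are crammed into an interval holding only $a$ integers.
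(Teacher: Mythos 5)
Your proof is correct and is essentially the paper's own argument run in the contrapositive direction: your two-way count in the window $[a,\,2a-1]$ is precisely the justification for the paper's tersely stated key claim that the interval $[0,\,2a_i-1]$ contains at most $a_i$ elements of $S(A\setminus\{a_i\})$. The paper then concludes directly that $S(A\setminus\{a_i\})$ contains an element $\ge 2a_i$, hence $S(A)$ one $\ge 3a_i$, giving $3a_i<N$, whereas you package the same packing bound as $2^{n-1}\le a_i$ under the assumption $a_i\ge N/3$ --- the two formulations are equivalent.
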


\begin{proof}
    Let us consider an element $a_i$ that is at most $2^{n-1}-1.$
    Note that the interval $[0,2a_i-1]$ can contain at most $a_i<2^{n-1}$ elements from the set $S(A \setminus \{a_i\})$. Consequently, $S(A \setminus \{a_i\})$ contains an element that is at least $2a_i$. This implies that $S(A)$ contains an element that is at least $3a_i$, and therefore $3a_i<N.$
\end{proof}

We can now prove Proposition~\ref{prop:application} using~\cref{prop:no_el_inN/3..}.

\begin{proof}[Proof of Proposition~\ref{prop:application}]
Let us consider  $A=\{a_1, a_2, \ldots, a_n\}$ with $a_1< a_2<\ldots <a_n$ having distinct subset sums and satisfying $\sum_{i=1}^n a_i<2^n+t$ for a fixed value of $t$. Then $A$ has also distinct sums modulo $2^n+t$. Assuming \cref{conj:EDSC_modular}, we have $\max{A} \ge \frac {2^n+t}{3} \ge \frac{1+\sum_{i=1}^n a_i}{3}.$ Applying~\cref{prop:no_el_inN/3..}, we conclude that $a_n \ge 2^{n-1}$.
Since $A$ has distinct subset sums, $\sum_{i=1}^{n-1} a_i \ge 2^{n-1}-1$. Consequently, $a_n \le 2^{n-1}+t.$
Now one can repeat the argument for the set $\{a_1, a_2, \ldots, a_{n-1}\}.$
\end{proof}

\section{Sumset-distinct sets modulo $N=2^n$}\label{sec:2^n}

In this section, we prove Theorem~\ref{thm:0}. The ``if'' direction can be easily proven by induction.
Since the cardinality of ${S(A)}$ is $2^n$, it suffices to observe that for every integer $k$, there is a solution for
$\eps_0 a_0 +\eps_1 a_1+ \ldots +\eps_{n-1}a_{n-1} \equiv k \pmod {2^n}$,
where $\eps_i \in \{0,1\}$ for every $0 \le i \le n-1.$
Note that $\eps_0 \equiv k \pmod 2.$ 
By the induction hypothesis for $n'=n-1$ and $a_j'=a_{j+1}/2$, $0\le j\le n'-1$, we find a solution for $\eps_1 a'_0+\eps_2 a'_1+ \ldots +\eps_{n-1}a'_{n'-1}\equiv \frac{k-\eps_0 a_0}{2} \pmod {2^{n-1}}$, implying that $\eps_1 a_1+ \ldots +\eps_{n-1}a_{n-1}\equiv k-\eps_0 a_0 \pmod {2^n}$, as desired. 

We provide two proofs of the ``only if'' direction. For each $0 \le j\le n-1,$ let $A_j =\{a \in A \mid v_2(a) \le j\}.$ 

\begin{proof}[Proof 1]
    Consider the generating function 
    $G_j(x)= \prod_{a \in A_j} \left( 1+x^a \right),$ which, in its expanded form, equals $\sum_{ a \in S(A_j)} x^a$.
    Since $A$ is sumset-distinct modulo $2^{n},$ every residue modulo $2^{j+1}$ appears equally often among the elements in $S(A_j).$
    By considering the expansion as a sum and using $\sum_{i=0}^{N-1} \omega_{2^{j+1}}^i=0$, we can deduce that $G_j(x)$ is zero when evaluated in 
    $\omega_{2^{j+1}}= \exp\left( \frac{ \pi \di}{2^j} \right).$
    The factorization of $G_j(x)$ implies the existence of a factor $1+\omega_{2^{j+1}}^a =0,$ indicating that there is at least one value $a$ for which $a \equiv 2^{j} \pmod{2^{j+1}},$ or equivalently, $v_2(a)=j.$
    Since this holds for every $0 \le j \le n-1$ and considering that $\abs{A}=n,$ 
    we can conclude that the elements of $A$ can be ordered such that $v_2(a_j)=j$, establishing the desired conclusion.
\end{proof}

\begin{proof}[Proof 2]
Assume, for the sake of contradiction, that there exists a set $A$ of cardinality $n$ that is sumset-distinct modulo $2^n$ but does not have the claimed form.
Let us consider such a set $A$ for which the cardinality $n$ is minimal.
We observe that $A$ must contain an odd number, as $S(A)$ contains all $2^n$ residues modulo $2^n$ and therefore includes the odd residues as well.
If $A_0=\{a_0\}$ would be a singleton, then $(A\setminus A_0)/2=\left\{ a/2 \mid a \in A\setminus A_0 \right \}$ would be sumset-distinct modulo $2^{n-1}$ and would not have the claimed form either.
This would contradict our choice of $A$ being a minimal example. Thus, we conclude that $\abs{A_0}\ge 2.$

Since $A$ is sumset-distinct modulo $2^n$, $S(A)=\{0,1,2,3\cdots,2^{n}-1\} \pmod{2^n}$ and so
\begin{align*}
    2^{n-1}\sum_{a\in A} a=\sum_{B\subseteq A} \sum_{a\in B} a\equiv \sum_{i=1}^{2^{n}-1} i
    =2^{n-1}(2^n-1) \pmod{2^n},
\end{align*}
which implies that $\sum_{a\in A} a$ is odd.

Let $N_e = \left\{B \colon \sum_{a \in B} a \text{ is even},  B \subseteq A\right\}$, and let $t:=|A_0|\ge 2$.

\begin{claim}\label{clm:sum_evensums}
$\sum_{B\in N_e} \sum_{a\in B}a =2^{n-2}\sum_{a\in A}a$
\end{claim}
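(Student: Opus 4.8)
The plan is to prove the claim by double counting. Rewriting the left-hand side by swapping the order of summation gives
$$\sum_{B\in N_e}\sum_{a\in B}a = \sum_{a\in A} a\cdot \bigl|\{B\in N_e : a\in B\}\bigr|,$$
so it suffices to show that \emph{every} element $a\in A$ lies in exactly $2^{n-2}$ even-sum subsets. Summing $a\cdot 2^{n-2}$ over $a\in A$ then yields $2^{n-2}\,s(A)=2^{n-2}\sum_{a\in A}a$, as required.

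First I would record the elementary parity fact: if a finite set $S$ of integers contains at least one odd element, then exactly half of its $2^{|S|}$ subsets have even sum. This follows from the involution $B\mapsto B\,\Delta\,\{x\}$ for a fixed odd $x\in S$, which changes the subset sum by $\pm x$ and hence flips its parity, giving a bijection between the even-sum and odd-sum subsets.

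Next, fix $a\in A$ and write each even-sum subset $B$ containing $a$ as $B=\{a\}\cup B'$ with $B'\subseteq A\setminus\{a\}$. Then $\sum_{b\in B}b=a+s(B')$ is even if and only if $s(B')\equiv a\pmod 2$, so the number of even-sum subsets containing $a$ equals the number of subsets $B'\subseteq A\setminus\{a\}$ whose sum has the same parity as $a$. Here is precisely where the hypothesis $t=|A_0|\ge 2$ enters: regardless of the parity of $a$, the set $A\setminus\{a\}$ still contains an odd element. Indeed, if $a$ is odd we have deleted one of the $t\ge 2$ odd elements, leaving at least one; if $a$ is even, all $t\ge 2$ odd elements of $A$ survive. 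Applying the parity fact to $A\setminus\{a\}$, which has $2^{\,n-1}$ subsets, shows that exactly $2^{\,n-2}$ of them attain any prescribed parity. Hence each $a$ lies in exactly $2^{\,n-2}$ even-sum subsets, which completes the count and proves the claim.

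The main (and essentially only) delicate point is the uniformity of the count $2^{\,n-2}$ across both parities of $a$; this is exactly what forces the use of $t\ge 2$. Were there only a single odd element, then choosing $a$ to be that element would leave $A\setminus\{a\}$ entirely even, placing it in the degenerate case of the parity fact where \emph{all} subset sums are even, and the uniform count would fail.
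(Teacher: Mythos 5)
Your proof is correct and takes essentially the same approach as the paper: both arguments double count by showing that every element of $A$ lies in exactly $2^{n-2}$ even-sum subsets, with the hypothesis $t=|A_0|\ge 2$ entering at the identical point, namely to guarantee that $A\setminus\{a\}$ still contains an odd element whatever the parity of $a$. The only difference is cosmetic: the paper evaluates the count via explicit binomial sums $\sum_{i \text{ odd}}\binom{t-1}{i}$ and $\sum_{i \text{ even}}\binom{t}{i}$, whereas you obtain the same number from the parity involution $B\mapsto B\,\Delta\,\{x\}$ applied to $A\setminus\{a\}$.
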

\begin{poc}
    When an odd element in $A$ is fixed, the number of choices for the other elements (to form $B$) such that the sum is even is given by $$2^{n-t}\cdot \sum_{0\le i\le t-1,i \text{ is odd}} \binom{t-1}{i} =2^{n-t+t-2}=2^{n-2},$$ where we have used the fact that $t\ge 2$. 

    Similarly, when an even element in $A$ is fixed, the number of choices for the other elements such that the sum is even is $$2^{n-t-1}\cdot \sum_{0\le i\le t,i \text{ is even}} \binom{t}{i} =2^{n-t-1+t-1}=2^{n-2}.$$
\end{poc}

Using Claim~\ref{clm:sum_evensums}, we can now derive that:
\begin{align*}
    2^{n-2}\sum_{a\in A} a= \sum_{B\in N_e} \sum_{a\in B}a \equiv \sum_{i=1}^{2^{n-1}-1}2i= 2^{n-1}\cdot(2^{n-1}-1) \pmod{2^{n}}.
\end{align*}
However, this implies that $\sum_{a\in A} a$ is even, a contradiction.
\end{proof}

\section{Sumset-distinct sets modulo $N=2^n+1$ or $N=2^n+3$}\label{sec:2^n+1}

In this section, we prove Theorem~\ref{thm:1-3}. The ``if'' direction is trivial, following from applying \cref{lem:obs1} and~\cref{lem:obs2} on $A=\{1,2,2^2,\ldots, 2^{n-1}\}.$

For the ``only if'' direction, we begin by proving a claim that will be used for both cases.

\begin{lem}\label{lem:relativeprime}
        If $N$ is $2^n+1$ or $2^n+3$, then every element in a sumset-distinct set (modulo $N$) $A$ must be relatively prime to $N.$
    \end{lem}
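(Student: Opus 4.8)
The plan is to argue by contradiction via a counting argument modulo a prime divisor of $N$, using that $A$ has cardinality $n$ (so that $S(A)$ has exactly $2^n$ elements). Suppose some element of $A$ is not coprime to $N$, and let $p$ be a prime dividing both $N$ and that element. Set $A_p = \{a \in A : p \mid a\}$ and $k = |A_p| \ge 1$. The starting observation is that for any subset $B \subseteq A$ the residue $s(B) \bmod p$ depends only on $B \setminus A_p$, since each element of $A_p$ contributes $0$ modulo $p$. Consequently, for every residue class $c$ modulo $p$, the number of subsets $B$ with $s(B) \equiv c \pmod p$ equals $2^k n_c$, where $n_c = \abs{\{B' \subseteq A \setminus A_p : s(B') \equiv c \pmod p\}}$.

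Next I would feed in the sumset-distinct hypothesis. Because all $2^n$ subset sums are distinct modulo $N$, and the $2^k n_c$ subsets counted above all land in the single residue class $c$ modulo $p$ inside $\Z/N\Z$ — a class containing exactly $m := N/p$ elements — we must have $2^k n_c \le m$ for every $c$. Writing $d_c = m - 2^k n_c \ge 0$ and summing over the $p$ classes gives $\sum_{c=0}^{p-1} d_c = pm - 2^k\cdot 2^{n-k} = N - 2^n = t$, where $t \in \{1,3\}$ is the number of missed residues.

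The crux is to show every $d_c$ is strictly positive, which forces $t \ge p$ and yields the contradiction. Since both $2^n+1$ and $2^n+3$ are odd, $N$ is odd, hence $m = N/p$ is odd; as $k \ge 1$ this gives $2^k \nmid m$, so $m \bmod 2^k \ge 1$. From $d_c \equiv m \pmod{2^k}$ together with $d_c \ge 0$, the smallest possible value is $m \bmod 2^k$, so $d_c \ge 1$ for every $c$, whence $t = \sum_c d_c \ge p$. For $N = 2^n+1$ we have $t = 1$ while $p \ge 3$, a contradiction. For $N = 2^n+3$ I would first record that $3 \nmid 2^n + 3$, since $2^n + 3 \equiv (-1)^n \pmod 3$, so the smallest prime divisor of $N$ is at least $5$; then $t = 3 < 5 \le p$, again a contradiction. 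I expect the main obstacle to be this final step — extracting $d_c \ge 1$ from the $2$-adic congruence and separately ruling out $p = 3$ in the $2^n+3$ case — rather than the counting, which is routine.
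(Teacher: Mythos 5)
Your proposal is correct and, despite the deficiency-counting packaging, is essentially the paper's own argument: both proofs rest on the observations that the number of subset sums landing in each residue class modulo the common divisor is even (you get divisibility by $2^k$ using all of $A_p$, where the paper gets evenness from the single translate decomposition $S(A)=S(A\setminus\{a\})\cup\bigl(a+S(A\setminus\{a\})\bigr)$), that $N/p$ is odd because $N$ is odd, and that $3\nmid 2^n+3$ rules out $p=3$ in the second case. The paper reaches the contradiction by noting that since at most $t\le 3$ residues are missing, at least $d-1$ (resp.\ $d-3$) classes modulo $d$ must be completely covered, each of odd size $N/d$, whereas you take the contrapositive route --- every class is deficient by at least one, so $t\ge p$ --- which is the same pigeonhole argument inverted.
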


    \begin{proof}
        Suppose there exists an element $a \in A$ and a common divisor $d$ such that $d \mid \gcd(a, N)$ and $d>1.$ Note that if $N=2^n+3$, then $d>3.$
        Since $S(A)$ contains exactly the elements in $S(A \setminus \{a\})$ along with its translate $a+S(A \setminus \{a\})$, every residue modulo $d$ appears an even number of times.
        Now consider the fact that $S(A)$ contains all residues modulo $N$, except for  exactly one residue (when $N=2^n+1$) or at most three residues (when $N=2^n+3$). Hence, at least one element (at least $d-1$ in the case of $N=2^n+1$, or at least $d-3$ in the case of $N=2^n+3$ ) appears $\frac{N}{d}$ times modulo $d.$ 
        However, since $N$ is odd, we know that $\frac Nd$ is also odd. This leads to a contradiction with the previous observation that every residue appears an even number of times modulo $d$.
    \end{proof}

We first prove the ``only if'' direction for $N=2^n+1$.

\begin{proof}[Proof for $N=2^n+1$]
   Let us fix one element $\lambda \in A$ and consider $A':=\lambda^{-1}A=\{\lambda^{-1}a\mid a \in A\},$ 
    where $\lambda^{-1}$
    is the inverse of $\lambda$ in $\frac{\mathbb Z}{N \mathbb Z}.$
    Since $\gcd(\lambda,N)=1,$ we have that $\gcd(\lambda^{-1},N)=1,$ and so $A'$ is also sumset-distinct modulo $N$ by~\cref{lem:obs2}.
    We note that by definition, $1$ belongs to  $A'$ and there is exactly one residue $x$ modulo $N$ that is not part of $S(A').$
    Furthermore, it is evident that for any $y$, if $y, y+1 \not \equiv x \pmod N,$ then exactly one of the two belongs to $S(A' \setminus \{1\}).$
    
    By writing the residues as the elements $x+1-N$ up to $x-1$, we can deduce that $S(A' \setminus \{1\}) \pmod N$ contains exactly the even numbers within the interval from $x+1-N$ up to $x-1$.
    Consequently, we can express $A'=\{1,a_2, a_3, \ldots, a_n\}$, where each $a_i$ is an even number within the interval $[x+1-N, x-1]$.
    We now present  three elementary observations:
    \begin{itemize}
        \item the sum of any set of negative numbers in $A' \setminus \{1\}=\{a_2, a_3, \ldots, a_n\}$ is an even negative number,
        \item since $N$ is odd, $S(A' \setminus \{1\})$ is not congruent modulo $N$ to any even number between $x-2N$ and $x-N$,
        \item for every $2 \le i \le n$, we have $\abs{a_i}< N$.
    \end{itemize} 
    Combining these observations, we can conclude that $S(A' \setminus \{1\})$ does not contain any element smaller than $x+1-N.$ Similarly, no element in $S(A' \setminus \{1\})$ is larger than $x-1.$
    Consequently, $S(A')$ is precisely equal to the interval $[x+1-N, x-1]$. A corollary of this is that the sum of all negative numbers in $A'$ results in the smallest number, $x+1-N$, and the sum of all positive numbers in $A'$ equals $x-1$.
    By replacing the negative numbers in $A'$ with their additive inverse, we obtain $A''$, which is still sumset-distinct modulo $N$ due to~\cref{lem:obs1}. Additionally, the sum of all elements in $A''$ equals $N-(x+1)+(x-1)=N-2=2^n-1$ and so $S(A'')=\{0,1,2,\ldots, 2^n-1\}.$ It remains to show that $A''=\{1,2,4,\ldots, 2^{n-1}\}.$ Suppose we know the $i$ smallest elements of $A''$ are $1,2,4,\ldots, 2^{i-1}$, for some $i$ with $0\le i\le n-1$, then since $2^i$ is the smallest element in $S(A)$ that cannot be expressed as a sum of any subset of these $i$ smallest elements, it must belong to $A''$. By induction, we can establish the desired conclusion.
\end{proof}

\begin{proof}[Proof of~\cref{cor:1}]
    Recall that $\res(x)=\min\{ x \pmod N, N- x\pmod N \}$ is the element between $0$ and $\floorfrac N2$ equivalent to $x$ or $-x$ modulo $N.$
    For every $\lambda \in [N]$ for which $\gcd(\lambda , N)=1,$ we consider 
    $R( \lambda)=\{ \res(\lambda), \res(2\lambda), \ldots, \res(2^{n-1}\lambda) \}$ as a representative set for all the sets of the form $\{ \pm \lambda, \pm 2 \lambda, \ldots \pm 2^{n-1}\lambda\}.$
    We note that $R(2\lambda)=R(\lambda)$ since $\res(2^n \lambda) = \res(-\lambda)$, as a consequence of $2^n \equiv -1 \pmod N$.
    This implies that $R(\lambda)=R( \pm 2^i \lambda)$ for every $0\le i \le n-1.$
    On the other hand, if $\lambda' \not \in \{ \pm \lambda, \pm 2 \lambda, \ldots \pm 2^{n-1}\lambda\}$, as elements in $\frac{ \mathbb Z}{N \mathbb Z}$, then $R(\lambda) \not= R(\lambda')$ and furthermore these sets are disjoint.
    There are $\phi(N)$ choices for $\lambda$, and thus $\frac{\phi(N)}{2n}$ different representative sets $R(\lambda).$
    Every set $R(\lambda)$ is associated with $2^n$ different sets of the form $\{ \pm \lambda, \pm 2 \lambda, \ldots \pm 2^{n-1}\lambda\}.$
    We conclude that there are exactly $\frac{2^n \phi{(N)}}{2n}$ sumset-distinct sets modulo $N=2^n+1.$
\end{proof}

Finally, we prove the case $N=2^n+3$ and begin by presenting a lemma.

\begin{lem}\label{lem:3APs}
    Suppose $A$ is a sumset-distinct set modulo $N=2^n+t$, where $t$ is an odd positive integer, and $A$ has cardinality $n$.
    In this case, the missing elements from $S(A)$ can be divided into a singleton $x$ and $\frac{t-1}{2}$ pairs $\{y,z\}$ such that $y+z=2x.$
\end{lem}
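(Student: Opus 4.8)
The plan is to exploit the complementation symmetry of $S(A)$. Since $A$ has cardinality $n$ and is sumset-distinct modulo $N=2^n+t$, the set $S(A)$ has exactly $2^n$ elements, so the set of missing residues $M:=\frac{\Z}{N\Z}\setminus S(A)$ has size $N-2^n=t$. The key observation is that $S(A)$ is invariant under the involution $\sigma\colon v\mapsto s(A)-v \pmod N$: indeed, for every subset $B\subseteq A$ its complement $A\setminus B$ has sum $s(A)-s(B)$, so $s(B)\in S(A)$ if and only if $s(A)-s(B)\in S(A)$. Consequently the missing set $M$ is also $\sigma$-invariant.

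Next I would analyse the fixed points of $\sigma$. A residue $v$ is fixed precisely when $2v\equiv s(A)\pmod N$; since $N$ is odd, $2$ is invertible modulo $N$, so there is exactly one fixed point, namely $x:=2^{-1}s(A)\pmod N$. The involution $\sigma$ therefore partitions $M$ into orbits of size one or two, where a size-one orbit must coincide with the unique global fixed point $x$. If $x\notin M$, then $M$ would be a disjoint union of two-element orbits and hence have even size, contradicting that $\abs{M}=t$ is odd. Thus $x\in M$, and $M\setminus\{x\}$ splits into $\frac{t-1}{2}$ pairs $\{y,z\}$ with $z=\sigma(y)=s(A)-y$, i.e.\ $y+z\equiv s(A)\equiv 2x\pmod N$, which is exactly the claimed structure (each pair together with $x$ forming a three-term arithmetic progression centred at $x$).

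The argument is short, and the only point requiring care is the parity bookkeeping: one must be sure that $\sigma$ has no fixed point other than $x$, which is guaranteed by $N$ being odd, so that the odd cardinality of $M$ forces $x$ itself to be a missing residue. I do not expect a genuine obstacle here; the entire content lies in recognising that complementation reflects $S(A)$ about $s(A)/2$ and that the unique centre of this reflection must lie in $M$.
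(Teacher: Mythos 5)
Your proof is correct and takes essentially the same approach as the paper: both rest on the complementation symmetry $v\mapsto s(A)-v$ of $S(A)$ (via $s(B)+s(A\setminus B)=s(A)$), whose unique fixed point $x\equiv 2^{-1}s(A)\pmod N$ exists because $N$ is odd. The only cosmetic difference is in how $x\in M$ is justified: the paper notes directly that no subset $B$ can have $s(B)\equiv x$, since then $A\setminus B$ would have the same sum, while you deduce it from the odd parity of $\abs{M}=t$; both are valid one-line arguments for the same step.
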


\begin{proof}
    Let $T:=s(A)$ be the sum of all elements in $A$.
    For every subset $B\subset A,$ we have $s(A \setminus B)+s(B)=T.$
    Therefore, an element $a$ belongs to $S(A)$ if and only if $T-a$ does.
    We note that no subset $B$ can satisfy $s(B)=\frac{T}2$, as otherwise $A \setminus B$ has the same sum.
    Since $[N]$ can be partitioned into $\frac{N-1}{2}$ pairs $\{a,b\}$ with $a+b=T$, along with the element $\frac{T}2,$ it follows that the element $\frac{T}2$ and $\frac{t-1}{2}$ pairs are missing, thereby establishing the lemma.
\end{proof}

\begin{proof}[Proof for $N=2^n+3$]
    According to~\cref{lem:3APs}, the three missing numbers in the sumset-distinct set modulo $N$ form an arithmetic progression of length $3.$ 
    We first argue that the common difference of these three numbers is relatively prime to $N.$

    \begin{claim}\label{clm:common_diff_relprime}
    Let $A$ be a sumset-distinct set modulo $N=2^n+3$, and let $x,y,z$ be the three missing elements in $S(A)$ such that $y+z=2x$. Then we have $\gcd(x-y,N)=1.$
    \end{claim}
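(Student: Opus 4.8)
The plan is to argue by contradiction using a root-of-unity evaluation of the sumset generating function, combined with a Galois-conjugate (field-norm) computation. Suppose $\gcd(x-y,N)>1$ and fix a prime $p\mid\gcd(x-y,N)$; since $N=2^n+3$ is odd, $p$ is odd. Write the three missing residues as $y=x-d$ and $z=x+d$ with $d=x-y$ (this is exactly the shape guaranteed by \cref{lem:3APs}), so $p\mid d$ forces $\omega_p^x=\omega_p^y=\omega_p^z$. Because $A$ is sumset-distinct with $|A|=n$, the set $S(A)$ is precisely $\frac{\Z}{N\Z}$ with $\{x,y,z\}$ removed, and $\omega_p=\exp\!\left(\frac{2\pi\di}{p}\right)$ is an $N$th root of unity since $p\mid N$. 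Evaluating the generating function at $\omega_p$ therefore gives
$$g_A(\omega_p)=\prod_{i=1}^n\left(1+\omega_p^{a_i}\right)=\sum_{s\in S(A)}\omega_p^s=\sum_{i=0}^{N-1}\omega_p^i-\left(\omega_p^x+\omega_p^y+\omega_p^z\right)=-3\,\omega_p^x,$$
where $\sum_{i=0}^{N-1}\omega_p^i=0$ because $p\mid N$ and $p>1$.

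The second step is to compute the field norm $N_{\mathbb{Q}(\omega_p)/\mathbb{Q}}$ of both sides, i.e.\ to multiply over all automorphisms $\sigma_c\colon\omega_p\mapsto\omega_p^c$ for $1\le c\le p-1$. The crucial input is \cref{lem:relativeprime}: every $a_i$ is coprime to $N$, hence to $p$, so each $\omega_p^{a_i}$ is a \emph{primitive} $p$th root of unity. Consequently the conjugates of $1+\omega_p^{a_i}$ run over all $1+\omega_p^{j}$, $1\le j\le p-1$, and
$$\prod_{c=1}^{p-1}\left(1+\omega_p^{c a_i}\right)=\prod_{j=1}^{p-1}\left(1+\omega_p^{j}\right)=1,$$
the last equality being $\Phi_p(-1)=1$ (the value at $X=-1$ of $\prod_{j=1}^{p-1}(X-\omega_p^j)=1+X+\cdots+X^{p-1}$, using that $p$ is odd). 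Hence the norm of the left-hand side is $\prod_{i=1}^n 1=1$, while the norm of the right-hand side is $\prod_{c=1}^{p-1}\left(-3\,\omega_p^{cx}\right)=(-3)^{p-1}\,\omega_p^{x\sum_{c=1}^{p-1}c}=3^{\,p-1}$, since $p-1$ is even and $\omega_p^{x\cdot p(p-1)/2}=1$.

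Comparing the two norms yields $1=3^{\,p-1}$, which is impossible for any odd prime $p$ (indeed $3^{\,p-1}\ge 9$). This contradiction shows that no prime divides both $x-y$ and $N$, i.e.\ $\gcd(x-y,N)=1$, proving the claim. I expect the main obstacle to be isolating the correct identity $\prod_{j=1}^{p-1}(1+\omega_p^{j})=1$ and, more importantly, guaranteeing that \emph{every} factor $1+\omega_p^{a_i}$ has norm exactly $1$: this breaks down if some $a_i$ is divisible by $p$ (the factor would become $2$), so the argument genuinely relies on the coprimality supplied by \cref{lem:relativeprime}. The collapse $\omega_p^x=\omega_p^y=\omega_p^z$ coming from $p\mid(x-y)$ (and hence $p\mid(z-x)$) is what turns the three missing terms into the single clean term $-3\omega_p^x$, whose norm $3^{p-1}$ cannot match $1$.
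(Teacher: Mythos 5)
Your proof is correct, and it takes a genuinely different route from the paper's. The paper argues combinatorially: writing $A_i=\{a_1,\ldots,a_i\}$ and letting $x_{i,j}$ count the elements of $S(A_i)$ in residue class $j$ modulo $p$, it notes that the three missing elements being congruent mod $p$ forces all $x_{n,j}$ to be congruent and nonzero modulo $3$, then runs a downward induction: the recursion $x_{i+1,j}=x_{i,j}+x_{i,j-a_{i+1}}$ is a circulant linear system whose $p\times p$ coefficient matrix has determinant $2$, hence invertible over $\Z/3\Z$, so all $x_{i,j}$ remain congruent and nonzero mod $3$ down to $i=1$, contradicting $x_{1,j}=0$ for some $j$. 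You instead evaluate the sumset generating function at $\omega_p$ (the same opening move the paper uses in \cref{thr:EDSC_modular} and in Proof 1 of \cref{thm:0}, but not here), collapse the three missing terms to $-3\omega_p^x$, and compare field norms: each factor $1+\omega_p^{a_i}$ is a unit of norm $\Phi_p(-1)=1$, while $-3\omega_p^x$ has norm $3^{p-1}>1$. Both arguments lean on \cref{lem:relativeprime} at exactly the same pressure point ($p\nmid a_i$), and amusingly they rest on the same identity in different clothing: the paper's determinant equals $\prod_{j=0}^{p-1}(1+\omega_p^j)=2$, whose nontrivial content is precisely your $\prod_{j=1}^{p-1}(1+\omega_p^j)=1$. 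Your route is shorter and slightly more general --- it needs only that $p$ is odd (the paper's mod-$3$ bookkeeping needs $3\nmid N/p$, whence its remark $p>3$), and it would show just as quickly that for any $t\ge 2$ the missing residues cannot all lie in a single class modulo a common divisor of $N$ --- whereas the paper's route buys elementarity, avoiding algebraic number theory, and extracts information about every partial sumset $S(A_i)$ rather than a single top-level identity. One cosmetic point: in asserting $\prod_{j=1}^{p-1}(1+\omega_p^j)=\Phi_p(-1)$ you should make explicit the sign $(-1)^{p-1}$ arising from $1+\omega_p^j=-\bigl((-1)-\omega_p^j\bigr)$, which is $1$ because $p$ is odd, as you implicitly use.
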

    \begin{poc}
        Suppose to the contrary that there is a prime divisor $p \mid \gcd(x-y,N)$.
        Since $N=2^n+3$ is not divisible by $2$ or $3$, we have $p>3$.
        Furthermore, all three elements  $x,y$, and $z$ are congruent modulo $p.$ Thus, among the residue classes modulo $p$, $S(A)$ contains $\frac{N}{p}-3$ elements from the one containing $x$, and exactly $\frac{N}{p}$ elements from every other residue class.
        Let $A=\{a_1,a_2, \ldots, a_n\}$ and $A_i=\{a_1,a_2, \ldots, a_i\}.$
        Then we have $S(A_{i+1})=S(A_i) \cup \left( S(A_i) + a_{i+1}\right).$
        Let the number of elements in $S(A_i)$ that are congruent to $j$ modulo $p$ be denoted as $x_{i,j}.$
        By definition, we have $x_{n,j} \equiv \frac{N}{p} \pmod 3$ for every $j \in [p]$.
        Using induction, we shall prove that for a fixed $i \in [n],$ $x_{i,j} \pmod 3$ is the same for every $j \in [p]$ and not equal to zero. This suffices to conclude the proof as some of the $x_{1,j}$ values are clearly equal to zero, a contradiction.

        Assuming the statement is true for $i+1$ to $n$, we will prove that it is also true for $i$. Note first that as $S(A_{i+1})=S(A_i) \cup \left( S(A_i) + a_{i+1}\right)$, by definition, we have
    $$x_{i+1,j}=x_{i,j}+x_{i,j-a_{i+1}},$$    
        where the index $j-a_{i+1}$ is considered modulo $p$ as well.
        It is important to note that, by~\cref{lem:relativeprime}, $p \nmid a_{i+1}.$ So we can consider these equations with variables $x_{i,k \cdot a_{i+1}}$ for $k \in [p]$ and constants $x_{i+1,j}$, 
        and rewrite the system of linear equations as follows:

        \[
\begin{bmatrix}
1 & 1 & 0 &0& \dots & 0 &0 \\
0 &1& 1 &0& \dots & 0 &0\\
\dots  & \dots  & \ddots  & \ddots & & \vdots   \\
0 & 0 & 0 & 0& \dots & 1&1 \\
1 & 0 & 0 & 0& \dots & 0&1 \\
\end{bmatrix}
\begin{bmatrix}
x_{i, a_{i+1}} \\ x_{i, 2a_{i+1}} \\  x_{i, 3a_{i+1}}  \\ \dots\\ x_{i, -a_{i+1}}   \\ x_{i,0}
\end{bmatrix}
=
\begin{bmatrix}
 x_{i+1, 2a_{i+1}} \\  x_{i+1, 3a_{i+1}}  \\ \dots\\ x_{i+1, -a_{i+1}}   \\ x_{i+1,0} \\x_{i+1, a_{i+1}} 
\end{bmatrix}
\]

The coefficient matrix in this case is a $p \times p$-matrix.
Since $p$ is odd, the determinant of the matrix is $2.$
Consequently, in $\frac{\mathbb Z}{3\mathbb Z},$ there exists a unique solution.
However, since all the constants $x_{i+1,j}$, $j\in[p]$, are the same by the induction hypothesis, this implies that all the variables $x_{i,j}$ are the same and equal to $2x_{i+1,j}$ modulo 3.
\end{poc}

    Let $\Delta$ be the common difference of $x,y,$ and $z$, which is coprime with $N$ according to~\cref{clm:common_diff_relprime}.
    Therefore, $S(\Delta^{-1}A)$ contains all elements in $[N]$ except for three  consecutive integers $q+1,q+2,$ and $q+3.$ All three integers are nonzero (since $0 \in S(\Delta^{-1}A)$) and thus $2^n>q \ge 0$.
    Let us consider the binary representation $b_{n-1}b_{n-2}\ldots b_{0}$ of $q$,
    denoted as $q= \sum_{i=0}^{n-1} b_i 2^i.$
    Now, we define $B=\{ (-1)^{b_i+1} 2^i\},$ where $i$ ranges from $0$ to $n-1$.
    It follows that $S(B)=\{q+4-N, q+5-N,\ldots,q\}=S(\Delta^{-1}A)=\Delta^{-1} S(A).$

By~\cref{thr:GK23}, we can conclude that the set $\Delta^{-1}A$ is derived from a set $B$ by possibly switching the signs of some elements, and one can even conclude that $B=\Delta^{-1}A$, i.e. $A=\Delta B.$
\end{proof}

\section{Sumset-distinct sets modulo $N=2^n+2$}\label{sec:2^n+2}

\begin{proof}[Proof of~\cref{thm:2}]
    If $A$ contains $N/2$, then $A \setminus \{N/2\}$ must be sumset-distinct modulo $N/2.$ 
    Therefore, it is sufficient to prove the characterization assuming that $N/2 \not \in A.$
    
    Let $A$ be a sumset-distinct set modulo $N$ which does not contain $N/2.$
    Let $s_1$ and $s_2$ be the two missing elements in $S(A)$, which are the elements in $[N]$ that cannot be expressed as a sum of a subset of $A$.
    Suppose $A=\{a_1,\dots, a_n\}.$
    \begin{claim}       $s_1-s_2 \not \equiv N/2 \pmod N.$
    \end{claim}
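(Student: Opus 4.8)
The plan is to mimic the root-of-unity argument used at the start of the proof of~\cref{thr:EDSC_modular}, applied to the sumset generating function $g_A(x)=\prod_{i=1}^n(1+x^{a_i})$. Because $A$ is sumset-distinct modulo $N$, the $2^n$ subset sums realise exactly the $2^n=N-2$ residues in $S(A)$, so evaluating at $\omega_N=\exp\!\left(\frac{2\pi\di}{N}\right)$ and using $\sum_{i=0}^{N-1}\omega_N^i=0$ gives
$$g_A(\omega_N)=\sum_{s\in S(A)}\omega_N^s=\sum_{i=0}^{N-1}\omega_N^i-\omega_N^{s_1}-\omega_N^{s_2}=-\left(\omega_N^{s_1}+\omega_N^{s_2}\right).$$

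Next I would argue by contradiction. Suppose $s_1-s_2\equiv N/2\pmod N$. Since $N=2^n+2$ is even, $\omega_N^{N/2}=\exp(\pi\di)=-1$, so $\omega_N^{s_1}=\omega_N^{s_2}\cdot\omega_N^{N/2}=-\omega_N^{s_2}$, and hence the two terms cancel, forcing $g_A(\omega_N)=0$. On the other hand, from the factorisation $g_A(\omega_N)=\prod_{i=1}^n(1+\omega_N^{a_i})$, a factor $1+\omega_N^{a_i}$ vanishes precisely when $\omega_N^{a_i}=-1=\omega_N^{N/2}$, i.e. when $a_i\equiv N/2\pmod N$. Since we are in the case $N/2\notin A$, no such $a_i$ exists, so every factor is nonzero and $g_A(\omega_N)\neq0$, a contradiction. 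This establishes the claim.

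There is no serious obstacle here; the argument is short and self-contained. The two points to be careful about are purely elementary: first, that $\omega_N^{N/2}=-1$ because $N$ is even (this is what turns the antipodal hypothesis $s_1-s_2\equiv N/2$ into the cancellation $\omega_N^{s_1}+\omega_N^{s_2}=0$); and second, that the final step uses the standing case hypothesis $N/2\notin A$ in exactly the right way, since the nonvanishing of every factor $1+\omega_N^{a_i}$ is literally equivalent to no element of $A$ being congruent to $N/2$ modulo $N$.
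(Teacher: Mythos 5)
Your proof is correct and is essentially identical to the paper's: both evaluate $g_A(x)=\prod_i(1+x^{a_i})$ at $\omega_N$, use $g_A(\omega_N)+\omega_N^{s_1}+\omega_N^{s_2}=0$, and note that the standing hypothesis $N/2\notin A$ forces every factor $1+\omega_N^{a_i}$ (hence $g_A(\omega_N)$) to be nonzero, so $\omega_N^{s_1}+\omega_N^{s_2}\neq 0$. The paper phrases this directly while you run it as a contradiction, but the content is the same.
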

    \begin{poc}
    
        Let $g_A(x)=\prod_i (1+x^{a_i})$.
    Then we have $g_A(\omega_N)+\omega_N^{s_1}+\omega_N^{s_2}= \sum_{i=0}^{N-1}\omega_N^i=0.$
    Since $N/2\not\in A$, $g_A(\omega_N)$ is nonzero. It follows that $\omega_N^{s_1}+\omega_N^{s_2}$ is also nonzero, which implies that $\omega_N^{s_2-s_1} \not= -1$. Therefore,  $s_1-s_2 \not \equiv N/2 \pmod N$, as claimed.    
    \end{poc}

    Since $2^{n}>2^{n-1}+1,$ $S(A)$ contains odd elements.
    This implies that $A$ contains at least one odd element.  By induction, we can conclude that $S(A)$ contains an equal number of odd and even sums. Consequently, $s_1$ and $s_2$ have different parities. Next, we will prove that $A$ contains exactly one odd element.

    \begin{claim}
        $A$ contains precisely one odd element. 
    \end{claim}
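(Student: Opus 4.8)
The plan is to pass to the odd modulus $m:=N/2=2^{n-1}+1$ and analyse the subset sums of $A$ through their reductions modulo $m$. Two preliminary observations drive everything. First, since $B\mapsto A\setminus B$ shows that $S(A)$ is invariant under $x\mapsto T-x$ with $T:=s(A)$, the two missing sums satisfy $s_1+s_2\equiv T\pmod N$; as they have opposite parity, $T$ is odd, so $A$ contains an \emph{odd} number of odd elements and in particular at least one. The task is therefore to exclude three or more. Second, the $2^{n-1}=m-1$ even-sum subsets have pairwise distinct even sums modulo $N$, hence pairwise distinct residues modulo the odd number $m$; thus they realise every residue modulo $m$ except exactly one, say $v$, and symmetrically the odd-sum subsets miss exactly one residue $u$. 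Here $v\equiv s_1$ and $u\equiv s_2\pmod m$, and $u\neq v$: otherwise $s_1\equiv s_2\pmod m$ together with $s_1\not\equiv s_2\pmod 2$ would give $s_1-s_2\equiv N/2\pmod N$, contradicting the first claim.

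Next I would extract an exact algebraic identity. Writing $g_A=P\cdot Q$ with $P(x)=\prod_{a\in A,\,a\text{ odd}}(1+x^a)$ and $Q(x)=\prod_{a\in A,\,a\text{ even}}(1+x^a)$, the generating polynomials of the even- and odd-sum subsets are $\tfrac12\bigl(g_A(x)\pm g_A(-x)\bigr)$. Evaluating at a primitive $m$-th root of unity $\zeta:=\omega_{m}$ and using the covering statements above yields $\tfrac12\bigl(g_A(\zeta)+g_A(-\zeta)\bigr)=-\zeta^{v}$ and $\tfrac12\bigl(g_A(\zeta)-g_A(-\zeta)\bigr)=-\zeta^{u}$. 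Since $Q(-\zeta)=Q(\zeta)$ while $P(-\zeta)=\prod_{a\text{ odd}}(1-\zeta^{a})$, subtracting the two relations gives the key identity
$$Q(\zeta)\prod_{a\in A,\,a\text{ odd}}\bigl(1-\zeta^{a}\bigr)=\zeta^{u}-\zeta^{v},\qquad u\neq v .$$

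Let $k$ denote the number of odd elements. In the generic situation $\gcd(u-v,m)=1$ I would conclude immediately: a shared prime factor $p\mid\gcd(a,m)$ of any element would force $u\equiv v\pmod p$ (divisibility by $p$ makes every residue class mod $p$ occur an even number of times in $S(A)$, since $S(A)$ is the disjoint union of $S(A\setminus\{a\})$ and its $a$-translate), so $\gcd(u-v,m)=1$ implies every element is coprime to $m$. Taking the product of the displayed identity over all nontrivial $m$-th roots $\zeta^{j}$, $1\le j\le m-1$, and using the elementary cyclotomic evaluations $\prod_{j=1}^{m-1}(1-\zeta^{ja})=m$, $\prod_{j=1}^{m-1}(1+\zeta^{ja})=1$ for $\gcd(a,m)=1$ and $\prod_{j=1}^{m-1}(\zeta^{ju}-\zeta^{jv})=m$ for $\gcd(u-v,m)=1$, the identity collapses to $m^{k}=m$, forcing $k=1$. (Equivalently, comparing the multiplicity of the prime above $m$ in $\mathbb Z[\zeta]$: the right-hand side is divisible exactly once, the left-hand side at least $k$ times.)

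The main obstacle is the degenerate case $\gcd(u-v,m)>1$, which genuinely occurs: for $A=\{2,3,4,8\}$ modulo $N=18$ one has $m=9$ and $3\mid\gcd(u-v,9)$, yet $k=1$. Here some element shares a prime $p$ with $m$, the factors $\prod_{j}(1-\zeta^{ja})$ with $\gcd(a,m)>1$ vanish, and the product identity degenerates to $0=0$, so the clean count is unavailable. I expect this to be the hard part, to be handled prime-by-prime via the $\mathfrak p$-adic valuation in $\mathbb Z[\zeta]$ for each $\mathfrak p$ above $p\mid m$, or by exploiting that the halved even part $\{a/2:a\in A\text{ even}\}$ is itself sumset-distinct modulo $m=2^{n-1}+1$ of cardinality $n-k$, so that the maximal-case characterisation in \cref{thm:1-3} can be brought to bear once $k$ has been bounded. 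Pushing this local analysis through to recover $k=1$ in full generality is the crux of the claim.
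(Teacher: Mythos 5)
The generic half of your argument is sound: the identity $g_A(-\zeta)=\zeta^{u}-\zeta^{v}$ for every nontrivial $m$-th root of unity $\zeta$ is correctly derived from the two covering statements, and when every element is coprime to $m$ and $\gcd(u-v,m)=1$, the product over $\zeta=\omega_m^j$, $1\le j\le m-1$, indeed collapses to $m^k=m$, forcing $k=1$. But, as you concede, this is not a proof of the claim: the degenerate case $\gcd(u-v,m)>1$ is left entirely open, and it is not an avoidable corner. The sets the theorem describes force it to occur: by the remark at the end of the paper, $\{x,2,4,\ldots,2^{n-1}\}$ is sumset-distinct modulo $N=2^n+2$ for every divisor $x$ of $m=2^{n-1}+1$, and for such sets (your $\{2,3,4,8\}$ modulo $18$ is the case $n=4$, $x=3$) the odd element shares a prime $p$ with $m$, whence $p\mid u-v$, the factors $1-\zeta^{ja}$ with $m\mid ja$ vanish, and the identity degenerates to $0=0$ at exactly the roots of unity that would carry the relevant $\mathfrak p$-adic information. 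Neither of your suggested repairs closes this: the prime-by-prime valuation analysis in $\mathbb Z[\omega_m]$ is only sketched (and it is unclear how you would control the valuations of the factors when elements, even or odd, share prime factors with $m$), while invoking \cref{thm:1-3} on the halved even part is circular, since that characterisation applies only to sumset-distinct sets modulo $2^{n-1}+1$ of \emph{maximal} cardinality $n-1$, i.e., only once $k=1$ is already known.

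For comparison, the paper's proof never meets this obstruction because it is local in precisely the parameter that breaks your computation. Assuming a second odd element exists, it fixes one odd $a\in A$, sets $d=\gcd(a,N)$, and uses $S(A)=S(A\setminus\{a\})\sqcup\bigl(S(A\setminus\{a\})+a\bigr)$ to see that every residue class modulo $d$ meets $S(A)$ an even number of times, so $s_1\equiv s_2\pmod d$. Walking along the $a$-cycle inside the class of $s_1$ then forces $s_2\equiv s_1-(2r-1)a\pmod N$ and determines $S_{s_1}(A\setminus\{a\})$ exactly; its parity imbalance is $\abs{2r-1-N/(2d)}$, which is nonzero by the preceding claim ($s_1-s_2\not\equiv N/2$) and strictly smaller than $N/(2d)$, while every other residue class contributes an imbalance of exactly $\pm N/(2d)$. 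The total imbalance is therefore nonzero, contradicting that $S(A\setminus\{a\})$ is parity-balanced --- and it is here, not at the start, that the assumed second odd element is used. So the coprimality difficulty you flag as the crux is the real one, and the paper resolves it not by a global cyclotomic norm but by counting parities within residue classes modulo $d=\gcd(a,N)$ itself; without an argument of this kind (or a completed valuation analysis), your proposal proves the claim only under the extraneous hypothesis $\gcd(u-v,m)=1$.
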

    \begin{poc}
        Assume that $A$ contains at least two odd numbers, with $a$ being one of them.
        Let $d=\gcd(a,N).$ 
        Since $S(A)=S(A \setminus \{a\})\cup (S(A \setminus \{a\})+a)$, every element modulo $d$ appears an even number of times.
        In particular, $s_1$ and $s_2$ have the same residue modulo $d.$
        Let $S_{i}(A)=\{s \in S(A) \mid s \equiv i \pmod d\}$ and let $S_{i}(A \setminus \{a\})$ be defined analogously.
        Since $s \in S(A \setminus \{a\})$ if and only if $s+a \in a+S(A \setminus \{a\}),$ for every $i \not \equiv s_1 \pmod d,$ $S_{i}(A \setminus \{a\})$ contains $\frac{N}{2d}$ elements, all of which have the same parity.
        Since $d=\gcd(a,N)$ and the two elements $s_1$ and $s_2$ have the same residue modulo $d$ but different parities,
        we can assume that $s_2 = s_1 - (2r-1)a \pmod N$, for some $r\in [N/2d]$. Then, 
        $$S_{s_1}(A \setminus \{a\})=\{ s_1 - 2ta \mid r> t\ge 1\} \cup \{ s_1 - (2t+1)a \mid N/2d > t \ge  r\} \pmod N,$$ 
        as $s \in S(A \setminus \{a\})$ if and only if $s+a \in a+S(A \setminus \{a\})$. The set $S_{s_1}(A \setminus \{a\})$ contains $\frac{N}{2d}-1$ elements, and since $s_1-s_2 \not= N/2 \pmod N,$  we have $r\ne (N/2d +1)/2$.
        This implies that the number of even and odd numbers in this set is not equal.
        Nevertheless, since $A \setminus \{a\}$ has at least one odd number by assumption, $S(A \setminus \{a\})$ contains the same number of odd as even numbers, which leads to  a contradiction.
    \end{poc}

    Denoting the single odd element of $A$ as $a$, we observe that $A \setminus \{a\}$ consists entirely of even numbers.
    As $A$ is sumset-distinct modulo $N$,
    the same applies to  $A \setminus \{a\}$. Hence, $(A \setminus \{a\})/2$ is sumset-distinct modulo $N/2$, and the conclusion follows.
\end{proof}

\section{Concluding remarks}
In this paper, we introduced a modular setting of the Erd\H{o}s distinct subset sums conjecture, and proved it asymptotically.
One may also investigate if a similar result holds for sets that are almost sumset-distinct modulo $N$, i.e.~sets having at least $(1-o(1))2^n$ different subset sums. 

We also characterise the sumset-distinct sets modulo $N=2^n+t$ of cardinality $n$, for $0 \le t \le 3.$ For larger $t$, we suggest the following problems. 
\begin{itemize}
    \item For every $t \ge 1,$ is the number of sumset-distinct sets modulo $N=2^n+t$ of cardinality $n$ bounded by $O_t(N^{2+v_2(t)})$?
    
    \item Given $t\ge 1$, let $n$ be sufficiently large. If $t$ is odd, is every sumset-distinct set modulo $N=2^n+t$ of cardinality $n$ a `perturbation' of the powers of $2$?
    
    \item If $t$ is even and $n$ sufficiently large, is it true that every sumset-distinct set modulo $N=2^n+t$ of cardinality $n$ either contains $N/2$, or contains exactly one odd number, or is a `perturbation' of the powers of $2$?
\end{itemize}

Here are two ways to construct perturbations of the powers of $2$.
\begin{itemize}
    \item $A=\{2^0+p_0, 2^1+p_1, \ldots, 2^{n-1}+p_{n-1}\}$, with $p_i\ge 0$, $p_{n-1}<t/2$ and $ p_{i+1} \ge \sum_{j=0}^{i} p_j$ for all $i\ge 0$,
    \item $A=\{a_0,a_1,\ldots,a_{i-1},2^i,2^{i+1},\ldots,2^{n-2}, 2^{n-1}\}$ with $v_2(a_j)=j$ for every $0 \le j \le i-1$.
\end{itemize}

We remark that the condition that $n$ is sufficiently large is necessary. For $N=2^5+4$, the set $\{1, 6, 11, 13, 15\}$ is sumset-distinct modulo $N$, but it is not of the above form.

In~\cref{sec:app1} it is additionally proven that many of the constructions of the first kind belong to different equivalence classes. 
It might be tempting to guess that for odd $t$ and $n$ large enough, every sumset-distinct set modulo $N=2^n+t$ is in the equivalence class of a set for which the sum of elements is bounded by $N$. This turns out to be false, as explained in~\cref{sec:app2}.

\section*{Acknowledgement}
The authors thank Rob Morris for helpful discussions at the early stage. The first author also thanks Tijs Buggenhout, Lei Yu, Noah Kravitz and Federico Glaudo for discussions.
\bibliographystyle{abbrv}
\bibliography{sumsets}

\section*{Appendix}\label{sec: appendix}

\appendix

\section{Perturbations of powers of $2$ that are sumset-distinct}\label{sec:app1}

We start with two families of sets that are sumset-distinct modulo $2^n+t$. Additionally, we can prove that many of these sets belong to distinct equivalence classes.

\begin{prop}\label{ex:constr2^n+2t+3}
    Let $r\ge 1$ be a fixed value, and set $t=2r+3$.
    Let us consider a set $A=\{2^0+p_0, 2^1+p_1, \ldots, 2^{n-1}+p_{n-1}\}$, which is a sumset-distinct sets modulo $2^n+t$. This holds for any choice of $p_i\ge 0$ that satisfies
    the condition $p_{i+1} \ge \sum_{j=0}^{i} p_j$ for all $i\ge 0$, and $r \ge p_{n-1}$.
    For sufficiently large $n$, each set of this form belongs to a distinct equivalence class.
\end{prop}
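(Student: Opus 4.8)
The plan is to exploit the rigid shape that the growth condition forces on admissible sets, reduce the claim to a rigidity property of dilations, and prove that property through the folded doubling map. First I would record the structural consequence of the hypotheses. Writing $P_i=\sum_{j=0}^i p_j$, the condition $p_{i+1}\ge P_i$ gives $P_{i+1}\ge 2P_i$, so if $P_{i_0}\ge 1$ then $P_{n-1}\ge 2^{\,n-1-i_0}$; combined with $P_{n-1}=P_{n-2}+p_{n-1}\le 2p_{n-1}\le 2r$ this shows $p_i=0$ whenever $i< n-1-\log_2(2r)$. Hence for $n$ large every admissible $A$ has the shape
$$A=\{2^0,2^1,\dots,2^{m-1}\}\cup\{2^m+p_m,\dots,2^{n-1}+p_{n-1}\},\qquad n-m\le \log_2(2r)+1=O(1),$$
with all perturbation concentrated in the top $O(1)$ coordinates; moreover each element is at most $2^{n-1}+r<N/2$, so $\res(a)=a$ for every $a\in A$. (Sumset-distinctness is then immediate: $A$ is superincreasing, since $a_i-\sum_{j<i}a_j=1+p_i-P_{i-1}\ge 1$, and $s(A)=2^n-1+P_{n-1}<N$, so its $2^n$ subset sums are distinct integers inside $[0,N-1]$.) By \cref{lem:obs1} a sign change fixes the multiset $\{\res(a):a\in A\}$, while by \cref{lem:obs2} a dilation sends it to $\{\res(\lambda a)\}$. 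Since the shape above lets one read off $p$ from $\{2^i+p_i\}$ (the intervals $[2^i,2^i+2r]$ are disjoint once $2^i>2r$, and $p_i=0$ for small $i$), it suffices to prove that no $\lambda\not\equiv\pm1$ carries one admissible res-multiset onto another; for then equal res-multisets force $p=p'$.

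The heart of the argument, and the step I expect to be hardest, is this dilation rigidity. I would study the orbit $c_i=\res(\lambda 2^i)$, which obeys the folded doubling recurrence $c_{i+1}=\res(2c_i)$. If $c_i=2^a$ with $a\le n-2$ then $c_{i+1}=2^{a+1}$, whereas $c_i=2^{n-1}$ maps to $\res(2^n)=\res(-t)=t$, which is not a power of $2$ since $t\ge 5$; and a power $2^a$ with $a\ge 1$ is always preceded by $2^{a-1}$. Thus powers of $2$ occur along the orbit only in ascending runs that end at $2^{n-1}$ and can begin only at index $0$ or at the value $1$. On the other hand, the images of the $m=n-O(1)$ exact powers of $A$ are $c_0,\dots,c_{m-1}$, and at most $O(1)$ of them can fail to be exact powers of $2$ (the target has only $O(1)$ perturbed elements); so at least $n-O(1)$ of the $c_i$ are powers of $2$. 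As the number of runs is bounded by the $O(1)$ non-power positions, the delicate bookkeeping is to rule out these powers being split over several short runs. This is controlled by the following observation: a first run fixes $\lambda\equiv\pm 2^{a}$ (from $c_0=2^a$) and any later value-$1$ run fixes $\lambda\equiv\pm 2^{-j}$ (from $c_j=1$), whence $2^{a+j}\equiv\pm1\pmod N$; but the gap between consecutive runs is $O(1)$, so $a+j=n+O(1)$, and $2^{n+O(1)}\equiv -t\cdot 2^{O(1)}\not\equiv\pm1$ since $t\ge 5$. Hence there is a single run, forcing $\lambda\equiv\pm 2^{s}\pmod N$ with $|s|=O(1)$.

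Finally I would eliminate $s\ne 0$ using the concentration of perturbation at the top, which is exactly where the constraint $p_{n-1}\le r$ enters. Assume $s\ge 1$ (the case $s<0$ follows by exchanging $A$ with its image). The exact powers $2^0,\dots,2^{m-1}$ of $A$ are sent to $2^s,\dots,2^{m-1+s}$, so the exact powers of the image occupy only scales $\ge s$; the vacated low scales $0,\dots,s-1$ can be filled solely by images of the $O(1)$ special elements, which are either wrapped powers $\res(2^s\cdot 2^{\,n-k})=t\,2^{\,s-k}$ (odd part $t\ge 5$, hence not exact powers) or perturbed images living at high scales. But an admissible target must carry exact powers at all low scales, and in particular its least element must be $1$: for $s\ge 2$ scale $1$ has no exact power in the image, and for $s=1$ no element of $A$ satisfies $\res(2a)=1$, since that would need $\res(a)=\frac{N-1}{2}=2^{n-1}+r+1$, larger than every element of $A$. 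Both possibilities contradict admissibility, so $s=0$ and $\lambda\equiv\pm1$. The two res-multisets then coincide and, by the read-off in the first paragraph, $p=p'$, proving that distinct admissible tuples lie in distinct equivalence classes.
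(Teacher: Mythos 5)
Your structural reduction (zeros form a prefix, perturbations confined to the top $O(1)$ indices, all elements below $N/2$ so that $\res(a)=a$), and your folded-doubling analysis of the orbit $c_i=\res(\lambda 2^i)$, are sound, and the latter is a genuinely different route from the paper: the paper never classifies runs of powers, but instead tracks only a few specific products (it shows $\lambda$ must be a perturbed element $2^j+q_j$ of $B$, multiplies by $2^{n-j-1}\in A$ to force $\pm(2^{n-1}+q_{n-1})$, and doubles once more to land on the small odd residue $\pm(t-2q_{n-1})\notin\pm B$). However, your final step eliminating $s\ne 0$ contains a genuine gap: both the claim that perturbed images live ``at high scales'' and the claim that ``for $s\ge 2$ scale $1$ has no exact power in the image'' are false. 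Concretely, take $r=3$, $t=9$, $N=2^n+9$, and the admissible set $A=\{2^0,\dots,2^{n-4},\,2^{n-3}+1,\,2^{n-2}+2,\,2^{n-1}+3\}$ (the growth condition and $p_{n-1}=3\le r$ hold), with $\lambda=2^3$, so $s=3$. Then $8(2^{n-3}+1)\equiv -1$, $8(2^{n-2}+2)\equiv -2$, and $8(2^{n-1}+3)\equiv -12 \pmod N$, so the res-image is $\{1,2,12\}\cup\{2^3,\dots,2^{n-1}\}$: the perturbed elements produce exact powers at scales $0$ \emph{and} $1$. Your stated contradiction therefore does not fire here (the image does fail to be admissible, but only at scale $2$ and via the element $12$), so the proof as written does not go through for $s\ge 3$.

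The gap is fixable, and the fix is exactly where the hypothesis $p_{n-1}\le r$ must enter. For $n-s\le i\le n-1$ one has $2^s(2^i+p_i)\equiv 2^d\bigl(2^{s-d}p_i - t\bigr)\pmod N$ with $d=i+s-n\in[0,s-1]$ and $2^{s-d}p_i-t$ odd; hence this residue can be $\pm 2^c$ only when $c=d$ and $2^{s-c}p_{n-s+c}=t\pm 1$, while wrapped images of pure powers have odd part $t\ge 5$ and unwrapped images sit at scale $\ge s$. Taking $c=s-1$ gives $2p_{n-1}=t\pm1$, i.e.\ $p_{n-1}\in\{r+1,r+2\}$, contradicting $p_{n-1}\le r$ (equivalently: $t-2p_{n-1}\ge 3$ is odd, the paper's key quantity). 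So the correct pigeonhole is scale $s-1$, not scale $1$; your claim happens to hold for $s\in\{1,2\}$ (your $s=1$ argument via $(N-1)/2=2^{n-1}+r+1>\max A$ is correct and is this same constraint in disguise) but fails for every $s\ge 3$, as the example shows. Two smaller points: in the run analysis you should note explicitly that a value-$1$ run starting at index $j_1\ge 1$ cannot terminate before index $j_1+n-1>m-1$, so two runs force the first to contain index $0$ (this does follow from your stated observations); and when concluding ``$p=p'$'' from equal sets you implicitly use $p_i\le r<2^i$ at perturbed indices to make the decomposition $2^i+p_i$ unique, which holds but deserves a line.
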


\begin{proof}
    Note that the given conditions imply that $t>2r \ge \sum_{j=0}^{n-1} p_j$.
    It is now sufficient to observe that the sum of all elements is bounded by $2^n-1+t$, and no two different subsets can have equal sums. This is a consequence of  the fact that $a_{i+1} > \sum_{j=0}^{i} a_j$ holds for every $i \ge 0$, where $a_j=2^j+p_j.$

    Next, we proceed to prove that two distinct sets of the given form, $A=\{a_1, a_2 \ldots, a_n\}=\{2^0+p_0, 2^1+p_1, \ldots, 2^{n-1}+p_{n-1}\}$ and $B=\{2^0+q_0, 2^1+q_1, \ldots, 2^{n-1}+q_{n-1}\}$, belong to different equivalence classes when $n\ge t> 2(r+1)$. 
    Assume, without loss of generality, that the number of $p_i$ values equal to zero is at least the number of $q_i$ values equal to zero.
    Let $j_0$ be the smallest index for which $q_{j_0}>0$.
    Since $r \ge p_{n-1}\ge \sum_{j=0}^{n-2} p_j$, there are at most $r+1$ occurrences of zero among the $p_i$ values.
    Since $n>2(r+1)$, we can conclude that more than half of the $p_i$ values(and similarly for $q_i$) are zeros.  Therefore, we have $j_0> \frac n2>r+1$.
    If $A$ and $B$ belong to the same equivalence class, there must exist a value $\lambda \in B$ such that $\{ \lambda a_1, \pm \lambda a_2, \ldots, \pm \lambda a_n\} $ is  equal to $B$.
    When $\lambda =2^s $ for some value of $1\le s\le j_0-1,$ it would imply that $2^{j_0} \in B$, leading to a contradiction.
    Therefore, we must have $\lambda=2^j+q_j$ for some $j \ge j_0.$
    Now, considering $2^{n-j-1}\lambda = 2^{n-1}+2^{n-j-1} q_j$, we need it to be equal to $\pm \left( 2^{n-1}+q_{n-1} \right) \pmod N$.
    The latter implies that $2^{n-j}\lambda = - \left( 2^{n}+2q_{n-1} \right) \pmod N,$ specifically equal to  $t-2q_{n-1}$.
    However, this value is an odd number that lies  strictly between $1$ and $2^{r+2}$, and therefore, it does not belong to $B.$
\end{proof}

For $t \in \{1,3,5,7\},$ it appears that the examples in \cref{ex:constr2^n+2t+3} generate all sumset-distinct sets modulo $2^n+t$. The number of examples, and therefore the number of equivalence classes in \cref{ex:constr2^n+2t+3}, is exponential in $t$ (related with \url{https://oeis.org/A131205}).
For larger odd values of $t$, there exist  alternative constructions as well.

\begin{examp}\label{ex:constr_pertubed_powersof2}
    A set $A=\{a_0,a_1,\ldots,a_{i-1},2^i,2^{i+1},\ldots,2^{n-2}, 2^{n-1}\}$ is sumset-distinct if $v_2(a_j)=j$ for every $0 \le j \le i-1$ and $N> 2^n-2^i +\sum_{j=0}^{i-1} a_j.$
\end{examp}

Note that these simple examples are all sumset-distinct in the original scenario and have a sum smaller than $N.$

\section{Sumset-disctinct sets could have large total sum}\label{sec:app2}
Our exact results for $N=2^n+t$, $0\le t\le 3$, and the plausible characterization for $t \in \{5,7\}$, suggest that for every odd $t>0$, every sumset-distinct set modulo $N=2^n+t$ could be equivalent to a set where the sum of elements is bounded by $N$. This turns out to be false for larger values of $t.$

\begin{prop}
    For $n\ge 8$ and $N=2^n+9$, there exists an equivalence class of 
    sumset-distinct sets modulo $N=2^n+9$
    that does not have a representative where the sum of the elements is smaller than $N.$
\end{prop}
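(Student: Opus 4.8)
The goal is to find a sumset-distinct set modulo $N=2^n+9$ whose entire equivalence class — under the two operations of Lemma~\ref{lem:obs1} (sign changes) and Lemma~\ref{lem:obs2} (dilation by $\lambda$ coprime to $N$) — avoids the property that $s(A)<N$. My strategy is to start from a single concrete sumset-distinct set that is \emph{not} of the ``small-sum'' perturbation form, and then argue that dilating or flipping signs can never bring the total sum below $N$. Since $t=9$ is odd, Lemma~\ref{lem:3APs} tells me the $9$ missing residues from $S(A)$ decompose as a singleton $x$ together with four pairs summing to $2x$; a natural source of such a structure is a configuration where the missing set is a symmetric arithmetic-progression-like pattern that cannot be realised by any set of the form $\{2^0+p_0,\ldots,2^{n-1}+p_{n-1}\}$ with $\sum p_i<N$.

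First I would fix a small seed: take $n=8$, write $N=2^8+9=265$, and search for (or construct by hand) a sumset-distinct set $A_0$ of cardinality $8$ whose representative $\res$-values force the total sum to exceed $N$. A promising template is a set built from a dilation of the powers of $2$ by some $\lambda$ with $\gcd(\lambda,265)=1$, chosen so that after reducing each $\lambda\cdot 2^i$ into $[0,N/2]$ via $\res$, the natural small-sum witness is destroyed. I would verify directly (a finite check) that $A_0$ is sumset-distinct modulo $265$. For general $n\ge 8$ I would then lift this seed by the second perturbation construction in Example~\ref{ex:constr_pertubed_powersof2} or by appending high powers $2^8,2^9,\ldots,2^{n-1}$, checking that the missing-residue structure (singleton plus four symmetric pairs) is preserved and that the sumset-distinct property survives because the appended elements exceed the partial sums.

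Next I would characterise the equivalence class of $A_0$ using Theorem~\ref{thr:GK23}. For $N=2^n+9\ge 2^n+3$, the remark after Theorem~\ref{thr:GK23} guarantees that the smallest $k$ with $2^k\equiv\pm1\pmod N$ satisfies $k\ge n+1$, so the second move of that theorem (replacing $\alpha U_k$ by $\beta U_k$) is \emph{unavailable} for sets of size $n$. Consequently two sumset-distinct sets of cardinality $n$ have the same subset-sum set if and only if one is obtained from the other by sign changes alone, and the full equivalence class is exactly $\{\,\lambda\cdot A_0 : \gcd(\lambda,N)=1\,\}$ up to signs. The core of the proof is then: for \emph{every} unit $\lambda$ and every choice of signs, the resulting set has total $\res$-sum at least $N$. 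I would phrase this as: each element contributes $\res(\lambda a)\in[1,N/2]$, and I must rule out any $\lambda$ for which the multiset $\{\res(\lambda a):a\in A_0\}$ sums below $N$; equivalently, ruling out that $\lambda A_0$ (up to signs) lands in the small-sum perturbation family classified in Appendix~\ref{sec:app1}.

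The main obstacle is this final uniformity-over-$\lambda$ step: a priori there are $\phi(N)$ dilations to control, and I cannot check them all for large $n$. The resolution I would pursue is \emph{structural rather than computational}: any representative with $s(A)<N$ must, by Proposition~\ref{prop:no_el_inN/3..} and the argument of Proposition~\ref{prop:application}, have its elements pinned to $2^{i-1}\le a_i\le 2^{i-1}+t$, i.e.\ it must lie in the perturbation family of Appendix~\ref{sec:app1}. So it suffices to show the \emph{subset-sum complement} $\{0,1,\dots,N-1\}\setminus S(\lambda A_0)$ — which is a dilation of the fixed missing-set of $A_0$ — never matches the missing-set shape produced by any small-sum perturbation. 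I would isolate one robust invariant of the missing residues (for instance, the common difference of the four symmetric pairs, shown coprime to $N$ exactly as in Claim~\ref{clm:common_diff_relprime}, or a parity/gap pattern) that small-sum perturbations must satisfy but $A_0$ violates, and check that this invariant is dilation-covariant in a way that no unit $\lambda$ can repair. Establishing that single obstruction-invariant, and verifying the $n=8$ base case by direct computation, is where the real work lies; the rest is bookkeeping with the two equivalence operations.
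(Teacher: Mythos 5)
The central gap is the structural step you lean on to dispose of all $\phi(N)$ dilations at once: you claim that any representative with $s(A)<N$ is pinned, ``by \cref{prop:no_el_inN/3..} and the argument of \cref{prop:application}'', to $2^{i-1}\le a_i\le 2^{i-1}+t$ and hence lies in the perturbation family of \cref{sec:app1}. But \cref{prop:application} is \emph{conditional} on \cref{conj:EDSC_modular}, which the paper does not prove --- only the asymptotic bound $(1-\eps)N/3$ of \cref{thr:EDSC_modular} is available, and that yields neither the exact $N/3$ threshold the pinning argument requires, nor anything at the base case $n=8$, nor control of the small-indexed elements (the pinning holds only ``for every large $i$''). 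Moreover, even granting the pinning, the claim that every small-sum sumset-distinct set modulo $2^n+t$ belongs to the perturbation family is essentially the open problem posed in the concluding remarks for odd $t$; and the single ``obstruction invariant'' of the missing residues that is supposed to separate $\lambda A_0$ from that family is never identified, only conjectured to exist. So the proposed structural resolution of the uniformity-over-$\lambda$ difficulty rests on unproved statements, and that difficulty is exactly where the content of the proposition lies.

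A second, independent problem is the lifting step. Appending $2^8,\dots,2^{n-1}$ to a seed $A_0$ with the remark that ``the appended elements exceed the partial sums'' only yields distinct subset sums over the \emph{integers}. This cannot suffice: if in addition the total sum were below $N$, the constructed set would itself be a small-sum representative and your class would trivially fail the statement; so the total sum must exceed $N$, and then integer-distinctness does not exclude wraparound collisions between sums $x$ and $x+N$ modulo $N$. The paper's proof confronts exactly this: it fixes the explicit set $\{3,6,12,24,2^4,\dots,2^{n-1}\}$ with $s(A)=2^n+29>N$ and rules out wraparound collisions by a bespoke argument combining the distinct $2$-adic valuations of the elements with divisibility by $3$ (a colliding pair would force residues $x$ and $x+N$ with $x\le 10$, leading to a contradiction modulo $3$). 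It then handles the dilations not by a structural invariant but by a case split over $\lambda$: a telescoping identity $\sum_{j=0}^{i+1}\res(2^j\lambda)=N-\lambda$, supplemented by $\res\bigl(\tfrac{3}{2}\lambda\bigr)$, settles the main range, and two finite computer verifications cover $\lambda\le 16$ and the residual cases. Your plan correctly identifies both the need for a seed and the uniformity-over-$\lambda$ obstacle, but as written it resolves neither.
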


\begin{proof}
    We will prove the validity of the statement for the sets $\{3,6,12,24,2^4,2^5,\ldots,2^{n-2},2^{n-1}\} \sim \{1,2,4,8,\ldots, 2^{n-5},5\cdot 2^{n-4}+3, 3\cdot 2^{n-3}+3,2^{n-2}+3,2^{n-1}+3  \}\sim \{3,4,6,8,16,\ldots,2^{n-3},2^{n-2}+3,2^{n-1}+3\}.$ 
    First, we will prove that the sets in the class are sumset-distinct modulo $N=2^n+9$.

    \begin{claim}
        The set $A=\{3,6,12,24,2^4,2^5,\ldots,2^{n-2},2^{n-1}\}$ is sumset-distinct modulo $N=2^n+9$.
    \end{claim}
    \begin{poc}
    Note that $s(A)=2^n+29.$
    The numbers have different $2$-adic valuations, thus the two subsets with equal sums should have sum $x$ and $2^n+9+x$.
    Since $x \le 10$, it must be a multiple of $3$.
    The elements not involved, summing to $20-2x$, should be multiples of $3$, or equal to $16$.
    However, since $20 \not \equiv 0,16 \pmod 3$, this is impossible.
    \end{poc}
            
    Next, we prove that no representative of this equivalence class has a sumset(sum of elements) smaller than $N.$

    \begin{claim}
        The set $A=\{1,2,4,8,\ldots, 2^{n-5},5\cdot 2^{n-4}+3, 3\cdot 2^{n-3}+3,2^{n-2}+3,2^{n-1}+3 \}$ has no representative whose elements sum to less than $N.$
    \end{claim}
    \begin{poc}
        Recall $\res(x)=\min\{ x \pmod N, N-( x\pmod N) \}$, where $x \pmod N$.
        We need to prove that for every $0< \lambda < \frac{N}{2}$, we have $\sum_{i=0}^{n-5} \res(2^i \lambda) + \sum_{i=1}^{4} \res\left( \frac{3}{2^i} \lambda\right ) \ge N.$
        According to ~\cref{prop:no_el_inN/3..}, we only need to consider values of $\lambda$ for which none of the elements belong to the interval $\left[ \frac N3, 2^{n-1}-1 \right].$
        For $\lambda \ge 17,$ we know that $\res( \lambda 2^i ) \ge \frac N3$ for some $0 \le i \le n-5.$
        Therefore, we can focus on the cases where either $ 2^{n-1} \le \lambda 2^i \le 2^{n-1} +9$ or $\lambda \le 17.$
        
        \textbf{Case 1: $1 \le \lambda \le 16$} These cases can be verified using a computer.\\ See \url{https://github.com/StijnCambie/ErdosSumSet/blob/main/lambda_till16.sagews}.
        
        \textbf{case 2: $\lambda \ge 17$, and $\lambda<\frac{N}{3}$ is even, or $ \lambda < \frac{N}{6}$}
       Note that there exists some $i\le n-6$ such that $2^i \lambda<\frac{N}{2}<2^{i+1} \lambda.$
        Now consider $\sum_{j=0}^{i+1} \res\left( 2^j \lambda \right) = (2^{i+1}-1)\lambda + (N-2^{i+1} \lambda) = N-\lambda.$
        If $\lambda$ is even, then $\res\left( \frac 32 \lambda \right) = \frac 32 \lambda $ and we are done.
        If $ \lambda < \frac{N}{6}$ is odd, then $\res\left( \frac 32 \lambda \right) = \frac{N}{2}-\frac 32 \lambda > \frac 32 \lambda $ as well.
        
        \textbf{Case 3: $\lambda$ is among a few remaining possibilities}
        
       These remaining cases can be also verified using a computer.\\ See \url{https://github.com/StijnCambie/ErdosSumSet/blob/main/largelambda.sagews}.
    \end{poc}
\end{proof}

\end{document}